\documentclass[11pt,reqno]{amsart}
\usepackage[shortlabels]{enumitem}

\usepackage{tikz}

\usepackage[unicode]{hyperref}

\usepackage{latexsym, mathrsfs}
\usepackage{amsrefs}

\usepackage{algorithm}
\usepackage{algpseudocode}
\usepackage{listings}

\usepackage[breakable,skins]{tcolorbox}
\newtcolorbox{tbox}[1][]{%
    breakable,
    enhanced,
    colframe=black,
    coltitle=white,
    #1
}

\setlength{\oddsidemargin}{.3in}
\setlength{\evensidemargin}{.3in}
\setlength{\textwidth}{5.9in}
\raggedbottom


\usepackage{amsmath,amsthm,amsfonts}
\usepackage{amssymb}
\usepackage{graphicx}
\usepackage{float}
\usepackage{caption}


\theoremstyle{plain}
\newtheorem{theorem}{Theorem}[section]
\newtheorem{lem}[theorem]{Lemma}

\newtheorem{cor}[theorem]{Corollary}

\newtheorem{computation}[theorem]{Computation}

\theoremstyle{definition}
\newtheorem{defn}[theorem]{Definition}
\newtheorem{eg}[theorem]{Example}

\newtheorem{rmk}[theorem]{Remark}



\newcommand{\bC}{\mathbb C}
\newcommand{\bZ}{\mathbb Z}
\newcommand{\bN}{\mathbb N}
\newcommand{\bQ}{\mathbb Q}
\newcommand{\bE}{\mathbb E}
\newcommand{\bF}{\mathbb F}
\newcommand{\bG}{\mathbb G}
\newcommand{\bP}{\mathbb P}

\newcommand{\LL}{\mathcal L}

\newcommand{\half}{\frac{1}{2}}


\newcommand{\Bl}{\text{Bl}}

\newcommand{\bEff}{\text{Eff}}
\newcommand{\bEffc}{\overline{\bEff}}

\newcommand{\Pic}{\text{Pic}}

\newcommand{\bFrob}{\text{Frob}}
\newcommand{\res}{\text{res}}


\newcommand{\gen}[1]{\langle {#1} \rangle}

\newcommand{\SL}{\text{SL}}

\begin{document}

\title{On The Geometry of Elliptic Pairs}
    \author[E.~Pratt]{Elizabeth Pratt}
    \email{epratt@berkeley.edu}
    \maketitle

    \begin{abstract}
    An \emph{elliptic pair} $(X, C)$ is a projective rational surface $X$ with log terminal singularities, and an irreducible curve $C$ contained in the smooth locus of $X$, with arithmetic genus $1$ and self-intersection $0$. They are a useful tool for determining whether the pseudo-effective cone of $X$ is polyhedral \cite{effcone}, and interesting algebraic and geometric objects in their own right. Especially of interest are toric elliptic pairs, where $X$ is the blow-up of a projective toric surface at the identity element of the torus. In this paper, we classify all toric elliptic pairs of Picard number two. Strikingly, it turns out that there are only three of these. Furthermore, we study a class of non-toric elliptic pairs coming from the blow-up of $\bP^2$ at nine points on a nodal cubic, in characteristic~$p$. This construction gives us examples of surfaces where the pseudo-effective cone is non-polyhedral for a set of primes $p$ of positive density, and, assuming the generalized Riemann hypothesis, polyhedral for a set of primes $p$ of positive density.
    \end{abstract}

	\section{Introduction} \label{intro}
	The effective cone of a projective variety $X$ and its closure, $\bEffc(X),$ are well-studied invariants of $X.$ In particularly nice cases $\bEffc(X)$ is polyhedral. This is true, for example, when $X$ is a projective toric variety. However, in general it is difficult to determine for an arbitrary projective surface $X$ whether $\bEffc(X)$ is polyhedral. Recent work by Castravet, Laface, Tevelev, and Ugaglia \cite{effcone} has shown that in the presence of a curve $C$ on $X$ satisfying certain properties, a polyhedrality criterion can be obtained in terms of the group structure of $\Pic (C)$. Using this criterion, they were able to prove that the Grothendieck-Knutsen moduli space $\overline{M_{0, n}}$ of stable rational curves has a non-polyhedral effective cone for $n \geq 10,$ by proving the corresponding statement for blow-ups of certain toric surfaces $\bP$. 
	
	More precisely, an \emph{elliptic pair} $(X, C)$ is a projective rational surface $X$ with log terminal singularities, and an irreducible curve $C$ contained in the smooth locus of $X$, such that the arithmetic genus of $C$ is one and $C^2 = 0$.  These elliptic pairs are not only useful for determining the polyhedrality of $\bEffc(X),$ but are also interesting geometric and arithmetic objects in their own right. In this paper we will construct elliptic pairs in two ways: first from blow-ups of toric surfaces of Picard number one, and then from blow-ups of $\bP^2$ at nine points on a nodal cubic.
	
	In Section \ref{triangles} we follow the procedure given in \cite{effcone} to construct \emph{toric elliptic pairs} $(X, C)$, which arise from lattice polygons $\Delta$ satisfying certain combinatorial properties. In this case $X$ will be the blow-up of $\bP(\Delta)$ at the identity element $e$ of the torus. We may denote $X$ instead by $X_\Delta$ if we wish to emphasize an underlying lattice polygon of $X$.
	
	In \cite{effcone} a number of interesting classification questions were raised. For instance, it is known that there are infinitely many pentagons which give rise to toric elliptic pairs with non-polyhedral effective cone. However, classification of quadrilaterals giving toric elliptic pairs is not known. Moreover, we do not have any examples of toric elliptic pairs $(X, C)$ where $X$ has Picard number three and $\bEffc(X)$ is non-polyhedral (see Remark 4.5 of \cite{effcone}).
	
	In this paper we completely classify all toric elliptic pairs coming from triangles, i.e.~where $X$ has Picard number two, the smallest possible. Strikingly, unlike the pentagon case, we prove that combinatorial restrictions allow only three such pairs.
	
	\begin{theorem} \label{trianglethm1}
	There are only three toric elliptic pairs $(X, C)$ where $X$ has Picard number two. They are given by lattice triangles $\Delta$ with vertices $\{(0, 0), (2,0), (5, 8)\}$ with $m = 4,$ $\{(0, 0),(5, 0), (12,20)\}$ with $m = 10,$ and $\{(0, 0),(5, 0), (18, 45)\}$ with $m = 15,$ where $m$ is the width of $\Delta.$
	\end{theorem}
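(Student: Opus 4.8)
The plan is to translate ``$(X_\Delta,C)$ is a toric elliptic pair with $X_\Delta$ of Picard number two'' into explicit constraints on the lattice triangle $\Delta$ and then run a finite classification. First I would unwind the construction of Section~\ref{triangles}: Picard number two forces $\bP(\Delta)$ to have Picard number one, so $\Delta$ is a lattice \emph{triangle}, the polarization $L:=\OO_{\bP(\Delta)}(\Delta)$ generates $\Pic(\bP(\Delta))\otimes\bQ$, and $C$ is the strict transform of the curve in $|L|$ acquiring a point of multiplicity $m:=\mathrm{width}(\Delta)$ at the torus identity $e$. The two axioms of an elliptic pair then become numerical: from $C^2=L^2-m^2$, the condition $C^2=0$ forces $L^2=2\,\mathrm{Area}(\Delta)=m^2$; and since the generic member of $|L|$ has arithmetic genus equal to the number $I(\Delta)$ of interior lattice points of $\Delta$, blowing up a point of multiplicity $m$ gives $p_a(C)=I(\Delta)-\binom m2$, so $p_a(C)=1$ reads $I(\Delta)=\binom m2+1$, equivalent by Pick's formula (using $2\,\mathrm{Area}=m^2$) to $\Delta$ having exactly $m$ boundary lattice points. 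Apart from the residual requirement that $C$ be reduced, irreducible and disjoint from $\mathrm{Sing}(X_\Delta)$, one is thus classifying lattice triangles with $\mathrm{width}(\Delta)^2=2\,\mathrm{Area}(\Delta)$ and exactly $\mathrm{width}(\Delta)$ boundary lattice points.

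I would then normalize by $\mathrm{AGL}_2(\bZ)$ so that the width is attained in the vertical direction, $\Delta\subseteq\{0\le y\le m\}$ meeting both lines. No edge of $\Delta$ can lie on $y=0$ or on $y=m$: a triangle has no two parallel edges, and an edge of lattice length $\ell$ on $y=m$ (or $y=0$) would give $2\,\mathrm{Area}=\ell m$, hence $\ell=m$ and then at least $m+2$ boundary lattice points. So $\Delta$ has one vertex on each bounding line and the third strictly between, and after a shear and translation its vertices are $V_0=(0,0)$, $V_1=(a,m)$, $V_2=(b,h)$ with $0<h<m$, $0\le a<m$. The conditions become: $|ah-bm|=m^2$; $\gcd(a,m)+\gcd(b,h)+\gcd(a-b,m-h)=m$; and --- since $m$ is the \emph{minimum} width --- $\max(|\langle u,V_1\rangle|,|\langle u,V_2\rangle|,|\langle u,V_1-V_2\rangle|)\ge m$ for every primitive $u\in\bZ^2$, i.e.\ the open hexagon $\{|x_1|<m,\ |x_2|<m,\ |x_1-x_2|<m\}$ contains no nonzero point of the index-$m^2$ sublattice $u\mapsto(\langle u,V_1\rangle,\langle u,V_2\rangle)$ of $\bZ^2$.

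Proving that this system has only the three stated solutions is the heart of the matter, and the step I expect to be the main obstacle. The subtlety is that the hexagon has area $3m^2$ against sublattice covolume $m^2$, so Minkowski's bound ($4>3$) only barely fails to force an extra lattice point --- the ``hollow hexagon'' condition is essentially sharp. Moreover the directions perpendicular to the edges of $\Delta$ carry no information (each such inequality reduces to $\gcd(\cdot,\cdot)\le m$, automatically true), so one must test the width against the directions ``between'' the edge normals. I would combine this with the arithmetic: from $m\mid ah$ one gets $\tfrac{m}{\gcd(a,m)}\mid h$, so $\gcd(a,m)\ge2$ and $h,b$ are determined by $a$ and one bounded parameter; feeding this into the partition $\gcd(a,m)+\gcd(b,h)+\gcd(a-b,m-h)=m$ and into a short explicit list of test directions should pin $h$, $a$ and the three edge lengths into a bounded range, reducing the problem to a finite check, in which $\gcd(a-b,m-h)=1$ in all surviving cases.

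A convenient way to both organize the finiteness and dispatch the residual irreducibility/smoothness condition is to note that the numerics force $K_{\bP(\Delta)}^2=1$ and $L=-mK_{\bP(\Delta)}$ in $\Pic(\bP(\Delta))\otimes\bQ$ (apply adjunction to a curve in $|L|$ on $\bP(\Delta)$, and to $C$ on $X_\Delta$), so $\Delta=m\,\Delta_{-K}$ for a fake weighted projective plane $\bP(\Delta)$ with $K^2=1$; then $\mathrm{width}(\Delta)=m$ is equivalent to $\mathrm{width}(\Delta_{-K})=1$, and $m$ must be a multiple of the Gorenstein index $r$. When $m=kr$ with $k\ge2$, the lattice-point count $\#(\Delta\cap M)=\binom{m+1}2+1$ against the $\binom{m+1}2$ conditions of multiplicity $m$ at $e$ shows that the unique curve of multiplicity $m$ at $e$ is the $k$-th power of the one for $m=r$, hence non-reduced; so only $m=r$ survives, and the problem collapses to enumerating fake weighted projective planes with $K^2=1$ and $\mathrm{width}(\Delta_{-K})=1$ and checking that their distinguished curve is irreducible and misses the singular locus. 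This yields the three surfaces $\bP(1,1,2)/G$, $\bP(1,4,5)/G$, $\bP(1,5,9)/G$ (with $|G|=8,5,5$) and, as their minimal $\Delta_{-mK}$ with $m=4,10,15$, precisely the three triangles of the statement.
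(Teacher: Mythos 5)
Your reduction of the elliptic-pair axioms to the combinatorial system --- $2\,\mathrm{Area}(\Delta)=m^2$, exactly $m$ boundary lattice points, and the width bound --- matches the paper's Definition \ref{toricelliptic} together with Lemma \ref{widthlemma}, and your reformulation via $L=-mK$, $K^2=1$ and fake weighted projective planes is an attractive repackaging that the paper does not use. But the proof has a genuine gap exactly where you flag it: the finite classification is never carried out. You reduce to ``a hollow hexagon of area $3m^2$ against a sublattice of covolume $m^2$,'' observe that Minkowski barely fails, and then say that a ``short explicit list of test directions \emph{should} pin'' the parameters into a bounded range; the final paragraph simply asserts that the enumeration yields $\bP(1,1,2)/G$, $\bP(1,4,5)/G$, $\bP(1,5,9)/G$. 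No bound on $m$ (equivalently on the Gorenstein index, which is a priori unbounded among fake weighted projective planes with $K^2=1$) is ever established, so the problem is not actually reduced to a finite check. This is precisely the hard part: the paper spends the entire proof of Theorem \ref{trianglethm} on it, splitting into three cases according to $\frac{a}{m}\in(\frac23,1)$, $(\frac12,\frac23]$, $[\frac13,\frac12]$, extracting explicit diophantine families such as \eqref{maineq}, \eqref{areatri2}, \eqref{case3quadratic} with bounded parameter sets, and finishing by computer search (Sections \ref{code1}--\ref{code3}).

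Two smaller points. First, you set $m:=\mathrm{width}(\Delta)$ from the outset and normalize $\Delta$ into the slab $\{0\le y\le m\}$; the paper only proves $\mathrm{width}(\Delta)\ge m$ (Lemma \ref{widthlemma}), and during the classification one must allow strict inequality --- equality is a conclusion for the three survivors, not a hypothesis. Second, in your argument eliminating $m=kr$ with $k\ge 2$, the lattice-point count $\#(\Delta\cap M)=\binom{m+1}{2}+1$ against $\binom{m+1}{2}$ conditions does \emph{not} give a unique curve of multiplicity $m$: the conditions are dependent and $\dim\LL_\Delta(m)=2$ (Lemma 3.2 of \cite{effcone}), so $\LL_\Delta(m)$ is a pencil. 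The conclusion can be rescued as in Theorem \ref{classificationthm} --- every member $\alpha g^k+\beta h^k$ of the pencil factors over $\bC$, so no irreducible $\Gamma$ exists --- but as written your uniqueness claim is false. You would also still need to verify existence of an irreducible $\Gamma$ with Newton polygon $\Delta$ for the three surviving triangles, which the paper does by explicit computation in Section \ref{fibrations}.
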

	
	\begin{rmk}
	Since $X$ has Picard number two in all of these cases, the effective cone is two-dimensional and generated by $C$ and the exceptional divisor $E$ over the identity element of the torus. Thus in this case the polyhedrality question is trivial and we are really only interested in the problem of classifying these pairs.
	\end{rmk}
	
	In Section \ref{fibrations} we show for each lattice triangle $\Delta$ in Theorem \ref{trianglethm1} that $\Delta$ gives rise to an extremal elliptic fibration. We will compute the singular fibers and Kodaira type. Figure~\ref{fig:triangle1intro} shows the curves on the minimal resolution $\widetilde{X}$ of $X$ for the $m=4$ case. Here the underlying elliptic pair is $(X, C).$ The curve $C$, which is not pictured in Figure~\ref{fig:triangle1intro}, passes through each boundary divisor given by a side of $\Delta$ with multiplicity equal to the lattice length of that side.
	
	The curves $\tilde{g}$ and $\tilde{h}$ in Figure~\ref{fig:triangle1intro} are additional curves of self-intersection~$-1$ on $X,$ which are obtained by analyzing curves on $X$ as in will be described in Section \ref{fibrations}.
	
	\begin{figure}[ht]
        \centering
        \tikzset{every picture/.style={line width=0.75pt}} 
\begin{tikzpicture}[x=0.75pt,y=0.75pt,yscale=-1,xscale=1]

\draw    (227,48) .. controls (244,60) and (261,61) .. (284,47) ;
\draw    (275,48) .. controls (284,66) and (305,74) .. (332,67) ;
\draw    (318,62) .. controls (316,87) and (323,101) .. (348,106) ;
\draw    (341,100) .. controls (334,113) and (333,131) .. (351,139) ;
\draw [line width=2.25]    (348,132) .. controls (334,146) and (331,161) .. (336,178) ;
\draw    (342,174) .. controls (320,168) and (303,178) .. (299,196) ;
\draw    (264,201) .. controls (276,189) and (292,187) .. (308,192) ;
\draw [line width=2.25]    (191,73) .. controls (208,80) and (234,67) .. (240,46) ;
\draw    (174,106) .. controls (195,101) and (206,80) .. (203,65) ;
\draw    (178,143) .. controls (185,131) and (189,114) .. (179,96) ;
\draw    (193,175) .. controls (198,154) and (192,141) .. (177,131) ;
\draw [line width=2.25]    (186,168) .. controls (208,165) and (224,175) .. (230,196) ;
\draw    (221,191) .. controls (239,185) and (263,186) .. (274,203) ;
\draw [color={rgb, 255:red, 189; green, 16; blue, 224 }  ,draw opacity=1 ]   (179,115) .. controls (217.5,123) and (270.5,159) .. (290,195) ;
\draw [color={rgb, 255:red, 245; green, 166; blue, 35 }  ,draw opacity=1 ]   (260,49) .. controls (271,93) and (302,112) .. (344,122) ;

\draw (213.5,50) node   [align=left] {\begin{minipage}[lt]{21.08pt}\setlength\topsep{0pt}
\mbox{-}1
\end{minipage}};
\draw (357.5,158) node   [align=left] {\begin{minipage}[lt]{21.08pt}\setlength\topsep{0pt}
\mbox{-}1
\end{minipage}};
\draw (202.5,193) node   [align=left] {\begin{minipage}[lt]{21.08pt}\setlength\topsep{0pt}
\mbox{-}1
\end{minipage}};
\draw (181.5,83) node   [align=left] {\begin{minipage}[lt]{21.08pt}\setlength\topsep{0pt}
\mbox{-}2
\end{minipage}};
\draw (251.5,209) node   [align=left] {\begin{minipage}[lt]{21.08pt}\setlength\topsep{0pt}
\mbox{-}2
\end{minipage}};
\draw (173.5,165) node   [align=left] {\begin{minipage}[lt]{21.08pt}\setlength\topsep{0pt}
\mbox{-}2
\end{minipage}};
\draw (307.5,50) node   [align=left] {\begin{minipage}[lt]{21.08pt}\setlength\topsep{0pt}
\mbox{-}2
\end{minipage}};
\draw (356.5,116) node   [align=left] {\begin{minipage}[lt]{21.08pt}\setlength\topsep{0pt}
\mbox{-}3
\end{minipage}};
\draw (340.5,82) node   [align=left] {\begin{minipage}[lt]{21.08pt}\setlength\topsep{0pt}
\mbox{-}2
\end{minipage}};
\draw (333.5,197) node   [align=left] {\begin{minipage}[lt]{21.08pt}\setlength\topsep{0pt}
\mbox{-}2
\end{minipage}};
\draw (261.5,41) node   [align=left] {\begin{minipage}[lt]{21.08pt}\setlength\topsep{0pt}
\mbox{-}3
\end{minipage}};
\draw (169.5,122) node   [align=left] {\begin{minipage}[lt]{21.08pt}\setlength\topsep{0pt}
\mbox{-}3
\end{minipage}};
\draw (297.5,211) node   [align=left] {\begin{minipage}[lt]{21.08pt}\setlength\topsep{0pt}
\mbox{-}3
\end{minipage}};
\draw (305,120) node [anchor=north west][inner sep=0.75pt]   [align=left] {$\tilde{g}$};
\draw (218,101) node [anchor=north west][inner sep=0.75pt]   [align=left] {$\tilde{h}$};

\end{tikzpicture}
        \caption{Intersections of curves on $\widetilde{X_\Delta}$, where $\Delta = \{(0, 0), (2,0), (5, 8)\}$}
        \label{fig:triangle1intro}
    \end{figure}
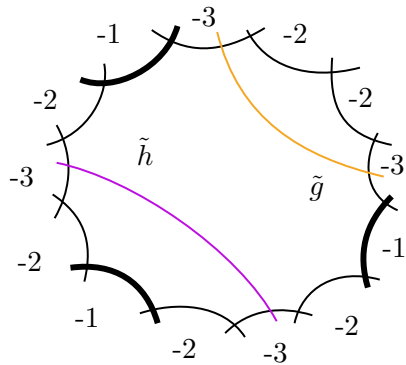
    
    By Castelnuovo's contraction criterion, we can then contract $\tilde{g}$ and $\tilde{h}$ to obtain the minimal model $Z$ of $\widetilde{X}.$ Figure~\ref{fig:singfibers} shows the singular fibers obtained for the $m=4$ case.
    
    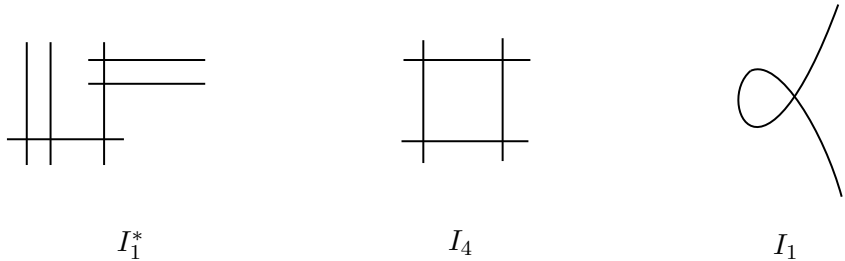
\begin{figure}[ht]
        \centering
        \tikzset{every picture/.style={line width=0.75pt}} 

\begin{tikzpicture}[x=0.75pt,y=0.75pt,yscale=-1,xscale=1]

\draw    (126,150.5) -- (185,150.5) ;
\draw    (167,110.5) -- (226,110.5) ;
\draw    (136,101.5) -- (136,163.5) ;
\draw    (148,101.5) -- (148,163.5) ;
\draw    (175,101.5) -- (175,163.5) ;
\draw    (167,122.5) -- (226,122.5) ;
\draw    (326,110.5) -- (390,110.5) ;
\draw    (336,100.5) -- (336,162.5) ;
\draw    (325,151.5) -- (389,151.5) ;
\draw    (376,99.5) -- (376,161.5) ;
\draw    (545.27,82.5) .. controls (507.12,186.37) and (482.85,131.43) .. (501.05,115.98) ;
\draw    (501.05,115.98) .. controls (515.79,109.11) and (538.33,147.74) .. (547,179.5) ;

\draw (180,195) node [anchor=north west][inner sep=0.75pt]   [align=left] {$I_1^*$};
\draw (347,195) node [anchor=north west][inner sep=0.75pt]   [align=left] {$I_4$};
\draw (511,197) node [anchor=north west][inner sep=0.75pt]   [align=left] {$I_1$};

\end{tikzpicture}
        \caption{The singular fibers of the minimal elliptic fibration $Z_\Delta,$ where $\Delta = \{(0, 0), (2,0), (5, 8)\}$}
        \label{fig:singfibers}
    \end{figure}
	
	Sections \ref{triangles} and \ref{fibrations} also answer a question posed by González-Anaya, González, and Karu in \cite{negcurves} and Kurano in \cite{kurano}: are there negative curves with positive genus on $\Bl_e \bP(\Delta)$ where $\Delta$ is a lattice triangle? A negative curve $C$ in $X$ is defined as a curve on $\Bl_e \bP(\Delta)$ of non-positive self-intersection. Our toric elliptic pairs $(\Bl_e\bP(\Delta)), C)$ provide an example of such curves, since by the definition of an elliptic pair we have $C^2 = 0$ and $p_a(C) = 1.$
	
	The paper \cite{effcone} also studies the distribution of primes $p$ such that the reduction of the toric elliptic pair $(X, C)$ modulo $p,$ where $C$ is an elliptic curve, has non-polyhedral effective cone. This is related to deep conjectures in arithmetic geometry of elliptic curves, including the Lang-Trotter conjecture \cite{langtrotter}. 
	
	In Section \ref{nodal}, we study an analogous question for a class of (non-toric) elliptic pairs, which come from blow-up of $\bP^2$ at nine points on a nodal cubic $C$. Instead of the Lang-Trotter conjecture, which concerns the arithmetic of elliptic curves, we will study connections to Artin's conjecture, which can be interpreted in terms of the arithmetic of nodal cubics. This construction gives us examples of surfaces where the pseudo-effective cone is non-polyhedral for a set of primes $p$ of positive density, and, assuming the generalized Riemann hypothesis, polyhedral for a set of primes $p$ of positive density.
    
	For our construction, we identify the smooth locus of the nodal cubic $C$ with $\Pic^0 (C),$ and $\Pic^0(C)$ with $\bG_m$ in such a way that $1 \in \bG_m$ is a flex point of $C.$ We say that $a, q \in \bQ$ are \emph{multiplicatively independent} in $\bQ^*$ if $a^xq^y =1$ implies that $x = y = 0.$ Let $z_1 = ... = z_7 = 1, z_8 = a, z_9 = qa^{-1}$ with $a, q \in \bQ$ multiplicatively independent. We construct $X$ to be the blow-up of $\bP^2$ at the nine points $z_1, ..., z_9$, which is an infinitely near blow up for $z_1, ..., z_7.$ That is, we consecutively blow up the point of intersection of the proper transform of $C$ with the exceptional divisor of the previous blow-up. Let $\bar{a}$ and $\bar{q}$ be the reductions of $a$ and $q$ modulo $p.$ 
	
	The surface $X$ defined this way and the proper transform of $C$ give an arithmetic elliptic pair $(\mathcal{C}, \mathcal{X})$ \cite{effcone}. That is, $(\mathcal{C}, \mathcal{X})$ are a pair of schemes which are flat over a nonempty open subset $\mathcal{U}$ of $\textup{Spec } \bZ,$ such that the the geometric fiber $(C_p, X_p)$ is an elliptic pair for every $p$ in $\mathcal{U}$. In this setting, we derive the following arithmetic condition for polyhedrality.
	
	\begin{theorem} \label{arithmeticintro}
	The pair $(\mathcal{C}, \mathcal{X})$ gives an arithmetic elliptic pair \cite{effcone} over a nonempty open subset $\mathcal{U}$ of $\textup{Spec } \bZ,$ such that the the geometric fiber $(C_p, X_p)$ is an elliptic pair for every $p$ in $\mathcal{U}$. Then $\overline{\textup{Eff}} (X_p)$  is polyhedral if and only if $\bar{a}^2 \in \gen{\bar{q}} \subset \bF_p^*.$
	\end{theorem}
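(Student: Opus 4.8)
The plan is to invoke the polyhedrality criterion of \cite{effcone}, which converts polyhedrality of $\overline{\bEff}(X_p)$ into a finiteness statement about the negative curves of the associated elliptic fibration, and then to turn that statement into the asserted arithmetic condition on $\bar a,\bar q$ by restricting divisor classes to $C_p$. That $(\mathcal C,\mathcal X)$ is an arithmetic elliptic pair over a nonempty open $\mathcal U\subseteq\Spec\bZ$ is immediate from the construction: for all but finitely many $p$ the cubic stays nodal with split node, $1$ stays a flex, and $z_1,\dots ,z_9$ remain distinct enough that $X_p$ is the described iterated blow-up, so that $C_p\in|-K_{X_p}|$ is irreducible of arithmetic genus $1$ with $C_p^2=0$ and lies in the smooth locus.

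Fix such a $p$ and identify $\Pic^0(C_p)\cong\bF_p^*$ with the flex as origin. Then $\mathcal{O}_{X_p}(H)|_{C_p}$ corresponds to $(3,1)\in\bZ\times\bF_p^*$ and $\mathcal{O}_{X_p}(E_i)|_{C_p}$ to $(1,z_i)$, so for a class $D=dH-\sum m_i E_i$ with $D\cdot K_{X_p}=0$ one gets $\mathcal{O}_{X_p}(D)|_{C_p}=\bar a^{\,m_9-m_8}\bar q^{\,-m_9}$ in $\bF_p^*$. In particular the normal bundle $N=\mathcal{O}_{C_p}(C_p)$ corresponds to $\bar q^{-1}$, and $n(D):=m_9-m_8=(E_9-E_8)\cdot D$ is invariant under adding multiples of $K_{X_p}$.

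Since $N$ has finite order $m=\operatorname{ord}(\bar q)$, the pencil $|-mK_{X_p}|$ is base-point free and defines a relatively minimal elliptic fibration $f\colon X_p\to\bP^1$ with $mC_p$ a multiple fiber; the $(-1)$-curves are the $m$-sections of $f$, permuted by translations from the Mordell--Weil group of the Jacobian rational elliptic surface. By \cite{effcone}, equivalently by Shioda--Tate applied to the Jacobian, $\overline{\bEff}(X_p)$ is polyhedral if and only if that Mordell--Weil group is finite, i.e.\ if and only if the sublattice of $\Pic(X_p)$ spanned by the $(-2)$-curves has rank $8$ modulo $\bZ K_{X_p}$. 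Now the iterated blow-up at the flex produces a fixed $E_7$: the proper transforms $E_i-E_{i+1}$ $(1\le i\le6)$ of the exceptional curves form an $A_6$-chain of $(-2)$-curves, and the proper transform $\widetilde L=H-E_1-E_2-E_3$ of the flex tangent is a $(-2)$-curve meeting this chain only in $E_3-E_4$ (using that the flex tangent has contact order exactly $3$ with $C$). These seven curves fill a single fiber of $f$, necessarily of Kodaira type $III^*$, and span an $E_7$ whose orthogonal complement in $K_{X_p}^{\perp}/\bZ K_{X_p}\cong E_8$ is $\langle E_9-E_8\rangle\cong A_1$. Hence the Mordell--Weil rank is always at most $1$, vanishing exactly when some $(-2)$-curve is independent of this $E_7$.

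Such a $(-2)$-curve must lie in a second reducible fiber, hence be disjoint from the $III^*$ fiber, hence its class is $\pm(E_9-E_8)+jK_{X_p}$ for some $j\in\bZ$; the remaining possibility, that the $III^*$ fiber is really $II^*$, needs an effective root $D$ with $n(D)=\pm1$, which by the formula above forces $\bar a^{\pm1}\in\gen{\bar q}$, a stronger condition. By the standard criterion that a root class (in the appropriate effective direction) on an anticanonical rational surface is effective precisely when it restricts trivially to the anticanonical curve, such a curve exists if and only if $\bar a^{-2}\bar q^{\,1+j}=1$ for some $j$, i.e.\ if and only if $\bar a^2\in\gen{\bar q}\subset\bF_p^*$. (Via $N=\bar q^{-1}$ and $z_8=\bar a$ this says exactly that $z_8^2$ lies in the subgroup generated by the normal bundle class.) Combining with the previous paragraph yields the theorem. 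The main obstacle is carrying out the geometry precisely: showing the flex configuration fills a fiber of type exactly $III^*$ (not $II^*$ or $I_n^*$), ruling out all other reducible fibers, and establishing the effectivity-of-roots criterion and the $(-1)$-curve/Mordell--Weil dictionary in the forms used here, including the effect of the multiple fiber.
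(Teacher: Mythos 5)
Your final computation is correct and, at bottom, coincides with the paper's: both arguments identify the effective $\bE_7$ spanned by $E_1-E_2,\dots,E_6-E_7$ and $h-E_1-E_2-E_3$, reduce polyhedrality to the existence of an effective root of $\bE_8$ outside this $\bE_7$, and detect effectivity by restricting to $\Pic^0(C_p)\cong\bG_m$, where $\res(C)=q^{-1}$ and the roots outside $\bE_7$ restrict to $a^{\pm1}$ or $a^{\pm2}$ modulo $\gen{q}$, giving the condition $\bar a^2\in\gen{\bar q}$. Where you genuinely diverge is in how you obtain the criterion ``polyhedral $\iff$ some root outside $\bE_7$ is effective'': you go through the Halphen fibration $|-mK_{X_p}|$ (with $m=\operatorname{ord}(\bar q)$), Shioda--Tate for its Jacobian, and finiteness of Mordell--Weil, whereas the paper contracts the seven $(-2)$-curves to a log terminal surface $Y$ of Picard number $3$ and directly invokes Corollaries 3.14 and 3.18 of \cite{effcone}. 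The paper's route is shorter because those corollaries are already proved for arbitrary elliptic pairs and require no analysis of the fibration; your route yields more geometric information (fiber types, Mordell--Weil rank $\le 1$) but at the cost of the machinery you yourself flag as the main obstacle: base-point-freeness of $|-mK_{X_p}|$, the $m$-section/Mordell--Weil dictionary in the presence of a multiple fiber, and the fiber-type bookkeeping. One point to correct in that bookkeeping: the seven curves form the \emph{finite} $E_7$ diagram, so they cannot ``fill'' a fiber --- they are properly contained in a fiber of type $III^*$ or $II^*$, which has at least one further component; your subsequent case analysis (the extra component is either $\pm(E_9-E_8)+jK$ or a root with $n(D)=\pm1$) is the right way to handle this, but the existence of that eighth component should be acknowledged rather than the configuration being called a complete $III^*$ fiber. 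None of these gaps affects the truth of the statement, and your restriction formula $\res(D)=\bar a^{\,m_9-m_8}\bar q^{\,-m_9}$ agrees with the paper's Table of root images.
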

	
	The question of when the condition in Theorem \ref{arithmeticintro} holds has been studied extensively in number theory. More precisely, consider the set of primes $p$ for which the condition is satisfied:
	\begin{equation*}
	    S(a, q) = \{ p \text{ prime: } \bar{a} \in \gen{\bar{q}} \subset \bF^*_p \}.
	\end{equation*}
	
	We show in Lemma \ref{nonpoly} that the set $S(a, q)^c$ contains a set of prime numbers of positive density. That is, there is a set of primes of positive density for which $\bEffc(X_p)$ is not polyhedral. 
	
	We also mention some known results about $S(a, q).$ First, by a theorem of P{\'o}lya \cite{polya}, $S(a, q)$ is infinite. Therefore, $\bEffc (X_p)$ is polyhedral for infinitely many primes. Furthermore, by Moree and Stevenhagen \cite{moree}, assuming the generalized Riemann hypothesis, $S(a, q)$ has positive density in the set of all prime numbers.
	
	Another useful result is Artin's conjecture on primitive roots, which states that if $q \in \bZ$ is neither a perfect square nor $-1,$ then $\bar{q} \mod p$ generates $\bF_p^*$ for a set of primes $p$ of positive density. If Artin's conjecture holds, we see that $S(a, q)$ will contain a set of positive density in the prime numbers.
	
	By work of Hooley \cite{hooley}, Artin's conjecture follows from the generalized Riemann hypothesis. 
	Thus by either Artin's conjecture or the work of Moree and Stevenhagen, if we accept the generalized Riemann hypothesis, there exists a set of primes of positive density for which $\bEffc(X_p)$ is polyhedral.
	
	Finally, a theorem of Heath-Brown tells us that for any three primes $(a, b, c),$ Artin's conjecture is true for at least one of $a, b,$ and $c$ \cite{heathbrown}. Combined with Lemma \ref{nonpoly}, this allows us to construct surfaces with a polyhedral effective cone for a set of primes of positive density, and a non-polyhedral effective cone for a set of primes of positive density. We construct such a surface concretely in Lemma \ref{threesurfaces}.
	
	\subsection{Acknowledgements} I am grateful to my senior thesis advisor, Jenia Tevelev, for his guidance and extensive feedback throughout this work. I would also like to thank Tom Weston for helpful discussions. This project has been partially supported by the NSF grant DMS-2101726 (PI Jenia Tevelev).
	
	\section{Toric Elliptic Pairs From Triangles} \label{triangles}
	We make preparations to define a toric elliptic pair, as in \cite{effcone}. First, given a lattice polygon $\Delta,$ we define a morphism
	\begin{align*}
	    g_{\Delta}:  \bG_2^m & \to \bP^{|\Delta \cap \bZ^2|-1}\\
	    x & \mapsto [x^iy^j: (i, j) \in \Delta \cap \bZ^2]
	\end{align*}
	We let $\bP(\Delta)$ be the closure of the image of $g_{\Delta},$ and $e = g_{\Delta}(1,1).$ We denote by $\LL_\Delta$ the linear system of hyperplane sections. Then any $f$ in $\LL_\Delta$ can be written as a Laurent polynomial with exponent vectors in $\Delta.$ That is, 
	\begin{equation*}
	    f = \sum_{(i, j) \in \Delta} a_{ij}x^iy^j \in k[x^{\pm 1}, y^{\pm 1}].
	\end{equation*}
	
	Given a positive integer $m$, we define $\LL_\Delta(m)$ to be the linear subspace of $\LL_\Delta$ consisting of the curves having multiplicity at least $m$ at $e$.
	Finally, we let $\text{Vol}(\Delta)$ be twice the Euclidean area of $\Delta,$ so that $\text{Vol}(\Delta)$ will be an integer.
	
	\begin{defn} \label{toricelliptic}
	Suppose there exist $\Delta$ and $m$ such that the following conditions hold:
	\begin{enumerate}[(i)]
	    \item $\textup{Vol}(\Delta) = m^2$;
	    \item $|\partial \Delta \cap \bZ^2| = m$;
	    \item There is an irreducible curve $\Gamma$ in $\LL_{\Delta}(m)$ such that 
	    \begin{enumerate}
	        \item $\Gamma$ has multiplicity $m$ at $e$
	        \item The Newton polygon of $\Gamma$  coincides with $\Delta.$
	    \end{enumerate}
	\end{enumerate}
	Let $C$ be the proper transform of $\Gamma$ under the blow-up. Then we call $(\textup{Bl}_e(\bP(\Delta), C)$ a \emph{toric elliptic pair}.
	\end{defn}
	
	\begin{rmk}
	Theorem 4.4 of \cite{effcone} proves that $(\Bl_e\bP(\Delta), C)$ is an elliptic pair. In particular, (i), (ii), and (iiia) tell us that $C$ has arithmetic genus 1, and (iiib) tells us that $\Gamma$ does not pass through the singularities of $\bP(\Delta).$ Thus $C$ does not pass through the singularities of $\Bl_e\bP(\Delta).$ Also, note that in the definition of toric elliptic pair given in \cite{effcone} the authors additionally require that $\dim \LL_{\Delta}(m) = 1.$ However, their proof of Theorem 4.4 does not use this fact, so we have dropped this assumption in Definition \ref{toricelliptic}.
	\end{rmk}

	 We call a lattice triangle \emph{primitive} if $\gcd(a, b, c) = 1.$ Now we can state our main theorem. In this section we will prove that there are at most three toric elliptic pairs of Picard number two, given by a primitive lattice triangle. In the next section we will verify that each of the lattice triangles in Theorem \ref{triangles} actually gives us a toric elliptic pair, and show that non-primitive triangles cannot give toric elliptic pairs. In the rest of this paper we will take ``triangle" to mean ``lattice triangle."
	
	As one may see from Definition \ref{toricelliptic}, not all lattice polygons give toric elliptic pairs. In Lemma \ref{widthlemma} we prove an additional arithmetic condition on the triangles $\Delta$ which do give toric elliptic pairs, in terms of the width of $\Delta$. Theorem \ref{trianglethm} shows that the arithmetic conditions from Lemma \ref{widthlemma} and (i) and (ii) of Definition \ref{toricelliptic} are only satisfied for three primitive lattice triangles, up to $\SL_2(\bZ)$ transformation.
	
	\begin{lem} \label{widthlemma}
	Let $\Delta$ be a lattice polygon, let $X = \textup{Bl}_e \bP (\Delta)$ and suppose $(X, C)$ is a toric elliptic pair. Let $m = | \partial \Delta \cap \bZ^2|.$ Then the width of $\Delta$ is $\geq m.$ 
	\end{lem}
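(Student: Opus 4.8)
The plan is to intersect the curve $\Gamma$ with a line through $e$, after first normalizing $\Delta$ so that its ``vertical'' extent realizes the width. First I would invoke the $\SL_2(\bZ)$- and translation-invariance of the lattice width: an element of $\SL_2(\bZ)$ acts on the torus $\bG_m^2$ by a monomial substitution fixing $e=(1,1)$, and translating $\Delta$ corresponds to multiplying $\Gamma$ by a Laurent monomial, which changes neither $\Gamma\cap\bG_m^2$ nor its multiplicity at $e$. Applying such a coordinate change I may assume $\Delta\subseteq\{(i,j):0\le j\le w\}$ with $w:=\textup{width}(\Delta)$. The transformed curve is still irreducible, its Newton polygon is still (the transformed) $\Delta$, and, using (ii) and (iiia) of Definition \ref{toricelliptic}, it still has multiplicity $m$ at $e$; so I rename these normalized data $\Delta,\Gamma$ and proceed.

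Next I would restrict $\Gamma=\sum_{(i,j)\in\Delta\cap\bZ^2}a_{ij}x^iy^j$ to the line $L=\{x=1\}$, which passes through $e$. The restriction $\Gamma|_{x=1}=\sum_{j=0}^{w}\bigl(\sum_i a_{ij}\bigr)y^j$ is a one-variable Laurent polynomial supported on $\{0,1,\dots,w\}$, so if it is not identically zero it has at most $w$ roots in $\overline{k}^{\times}$ counted with multiplicity; in particular $\textup{ord}_{y=1}(\Gamma|_{x=1})\le w$. It is not identically zero: otherwise $x-1$ would divide the defining Laurent polynomial of $\Gamma$, forcing $\Gamma$ to be an associate of $x-1$, whose Newton polygon is a segment — contradicting $\dim\Delta=2$, which holds since $\textup{Vol}(\Delta)=m^2>0$. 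Consequently $L$ is not a component of $\Gamma$, so $\textup{ord}_{y=1}(\Gamma|_{x=1})$ is exactly the local intersection number $I_e(\Gamma,L)$. Since $e$ lies in the torus — the smooth locus of $\bP(\Delta)$ — and $L$ is smooth at $e$, we have $I_e(\Gamma,L)\ge\textup{mult}_e(\Gamma)=m$. Comparing the two estimates gives the claimed bound $m\le w$.

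The genuinely routine ingredients here are the $\SL_2(\bZ)$-invariance of the lattice width and the classical inequality $I_e(\Gamma,L)\ge\textup{mult}_e(\Gamma)$ for a line through a smooth point of $\Gamma$. The step that needs the most care is the normalization in the first paragraph: one must verify that conjugating by the monomial automorphism of $\bG_m^2$ (possibly after a monomial rescaling) keeps $\Gamma$ irreducible, carries its Newton polygon to the transformed $\Delta$, and preserves $\textup{mult}_e(\Gamma)$. The conceptual heart of the argument is simply that passing to the vertical line through $e$ converts the local condition ``$\textup{mult}_e(\Gamma)\ge m$'' into a global bound on the spread of $y$-exponents occurring in $\Gamma$, which is exactly the width of $\Delta$.
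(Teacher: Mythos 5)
Your argument is correct. It rests on the same geometric idea as the paper's proof --- intersect $\Gamma$ with the one-parameter subgroup through $e$ in the direction realizing the width --- but the execution is genuinely more elementary. The paper works globally on the blow-up: writing $\pi^*H \sim C + mE$ and applying the projection formula gives $w_\lambda = C\cdot\widetilde{D_\lambda} + m$, and the bound follows from $C\cdot\widetilde{D_\lambda}\ge 0$ because $C$ and $\widetilde{D_\lambda}$ are distinct irreducible curves (their Newton polygons differ). You instead stay entirely inside the torus: after normalizing so that the width is the vertical extent, the restriction $\Gamma|_{x=1}$ is a univariate polynomial of degree spread at most $w$, so its order of vanishing at $y=1$ is at most $w$, while the classical inequality $I_e(\Gamma,L)\ge \mathrm{mult}_e(\Gamma)=m$ bounds that order below. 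Your non-vanishing argument for $\Gamma|_{x=1}$ (irreducibility plus two-dimensionality of the Newton polygon) plays exactly the role of the paper's observation that $C$ and $\widetilde{D_\lambda}$ are distinct curves. What the paper's route buys is that it proves $w_\lambda\ge m$ for \emph{every} primitive direction $\lambda$ uniformly, with no normalization step; what your route buys is that it avoids intersection theory on the blow-up altogether and reduces the lemma to counting roots of a one-variable polynomial. The normalization step you flag as delicate is indeed fine: a monomial substitution by $\SL_2(\bZ)$ is an automorphism of $\bG_m^2$ fixing $e$, hence preserves irreducibility and the multiplicity at $e$, and carries Newton polygons to their images; multiplication by a monomial changes nothing on the torus.
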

	
	\begin{proof}
	Pick a primitive vector $\lambda$ in the lattice $\bZ^2$ of one-parameter subgroups. Let $D_{\lambda}$  be the prime divisor of $\bP (\Delta)$ given by the closure of the corresponding one-parameter subgroup. More explicitly, if $\lambda = (i, j)$ the corresponding one-parameter subgroup is $(t^i, t^j),$ which is given by the equation $x^j = y^i.$
	
	Let $H$ be a very ample divisor on $\bP(\Delta)$ that gives a linear system $\LL_\Delta.$ Let $w_\lambda$ be the width of $\Delta$ along $\lambda.$ Then $w_{\lambda} = H \cdot D_{\lambda}.$
	
	Now, consider the proper transform $\widetilde{D_{\lambda}}$ in the blow-up $\Bl_e \bP (\Delta).$ Since $D_{\lambda}$ goes through the identity with multiplicity one, we have $E \cdot \widetilde{D_{\lambda}} = 1.$  We also have $\pi^* H \sim C + mE.$
	
	By the projection formula, 
	$$w_{\lambda} = H \cdot D_{\lambda} = \pi^*H \cdot \widetilde{D_{\lambda}}.$$
	So we have
	\begin{align*}
	    w_{\lambda} & = (C + mE) \cdot \widetilde{D_{\lambda}}
	    = C \cdot \widetilde{D_{\lambda}} + m(E \cdot \widetilde{D_{\lambda}}) \\ 
	    & = C \cdot \widetilde{D_{\lambda}} + m.
	\end{align*}
	
	The Newton polygon of $C$ is $\Delta$ by (iii) of Definition \ref{toricelliptic}. However, the Newton polygon of $D_{\lambda}$ is a line segment. Thus they are not translates of one other. This tells us that the proper transforms of $C$ and $D_{\lambda}$ are two different irreducible curves on the blow-up. Thus $C \cdot \widetilde{D_{\lambda}} \geq 0,$ giving us that $w_{\lambda} \geq m.$
	\end{proof}
	
	We begin by choosing representatives for equivalence classes of polygons under transformations in $\text{SL}_2(\bZ).$ Every triangle is equivalent to a triangle with vertex set $\{(0,0), (a, 0), (b, c)\}$ such that $a > 0$ and $0 \leq b < c.$
	
	\begin{theorem} \label{trianglethm}
	Let $\Delta$ be a primitive lattice triangle with width $w.$ Let $m = | \delta \Delta \cap \bZ^2 |.$ Suppose
	\begin{enumerate}[(i)]
	    \item $w \geq m$;
	    \item $m^2$ is twice the Euclidean area of $\Delta.$
	\end{enumerate}
	
	Then $\Delta$ can be obtained via an $SL_2(\bZ)$ transformation from one of $\{(0, 0), (2,0), (5, 8)\}$ with $m = 4,$ $\{(0, 0),(5, 0), (12,20)\}$ with $m = 10,$ and $\{(0, 0),(5, 0), (18, 45)\}$ with $m = 15.$
	\end{theorem}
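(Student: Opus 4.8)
The plan is to put $\Delta$ in the stated normal form $\{(0,0),(a,0),(b,c)\}$ with $a\ge 1$ and $0\le b<c$, translate both hypotheses into arithmetic relations among $a,b,c,m$, extract a short list of inequalities by testing the lattice width against well-chosen one-parameter-subgroup directions, reduce to a finite search, and finally collapse the surviving triples into $\SL_2(\bZ)$-orbits.

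First I would record that twice the Euclidean area of $\Delta$ is $ac$, so (ii) reads $ac=m^2$, while the number of boundary lattice points is the sum of the edge lattice lengths, so $m=a+\gcd(b,c)+\gcd(c,b-a)$; set $d_1=\gcd(b,c)$ and $d_2=\gcd(c,b-a)$, so $a+d_1+d_2=m$. From $ac=m^2$ we get $a\mid m^2$ and $c\mid m^2$, hence $d_1,d_2\mid m^2$ as well. Since $\Delta$ is primitive, $\gcd(d_1,d_2)$ divides $b$, $c$ and $b-a$, so it divides $\gcd(a,b,c)=1$, giving $\gcd(d_1,d_2)=1$; and if a prime $p$ divided $\gcd(a,d_i)$ then $p\mid m^2$ would force $p\mid m$, while $p\mid a+d_i=m-d_{3-i}$ would force $p\mid d_{3-i}$, a contradiction. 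Thus $a,d_1,d_2$ are pairwise coprime and $ad_1d_2\mid m^2$.

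Next come the width inequalities $w_\lambda(\Delta)\ge m$. Taking $\lambda=(0,1)$ gives $c\ge m$, so $a=m^2/c\le m$; if $a=m$ then $c=m$ and the boundary sum is $m+d_1+d_2>m$, impossible, so $a<m<c$. Taking $\lambda=(1,0)$ gives $\max(a,b)\ge m$, hence $b\ge m>a$; taking $\lambda=(1,-1)$ gives $a+c-b\ge m$, i.e. $b\le c-d_1-d_2$. The harder input is that for $1\le k<m/a$ the directions $\lambda=(k,-\lfloor kb/c\rfloor)$ and $\lambda=(-k,\lceil kb/c\rceil)$ confine $kb\bmod c$ to a short sub-interval of $[0,c)$; a pigeonhole/three–distance estimate on the residues $b,2b,\dots\pmod c$ then shows these constraints are incompatible with $c=m^2/a$ unless $a$—and hence $c/m$ and $m$—is small. (In the "obvious" directions $(1,0),(0,1),(1,-1)$ the inequalities are essentially automatic by convexity, so choosing directions adapted to the two long edges of $\Delta$ is the point.) This leaves only finitely many $(m,a,d_1,d_2)$.

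Finally, for each such $(m,a,d_1,d_2)$ the conditions $d_1\mid b$, $d_2\mid b-a$, $\gcd(b,c)=d_1$, $\gcd(b-a,c)=d_2$ and $m\le b\le c-d_1-d_2$ leave finitely many $b$; for each resulting triangle one checks whether its lattice width is $\ge m$, rejecting via an explicit functional $(p,q)$ with $w_{(p,q)}<m$ and accepting after checking the few directions that could beat $m$. The accepted triples must then be sorted modulo $\SL_2(\bZ)$: the normal form is not unique, since one may lay any edge along the $x$-axis, so for instance $\{(0,0),(1,0),(10,16)\}$ also survives and must be exhibited as $\SL_2(\bZ)$-equivalent to $\{(0,0),(2,0),(5,8)\}$. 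One verifies that for each of $m=4,10,15$ all accepted triples lie in a single orbit, represented by the three triangles in the statement, and that for every other $m$ no triple survives. I expect the uniform bound on $m$ coming from the width condition (the pigeonhole step) to be the main obstacle, with the finite verification in the last step a close second in bookkeeping.
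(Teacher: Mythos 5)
Your setup coincides with the paper's: the normal form $\{(0,0),(a,0),(b,c)\}$, the identities $ac=m^2$ and $a+\gcd(b,c)+\gcd(c,b-a)=m$, and the width inequalities in the directions $(1,0)$, $(0,1)$, $(1,-1)$ giving $a<m\le b$ and $b\le c-m+a$ are all exactly the paper's starting point (the coprimality observations are a nice extra). The genuine gap is the step you yourself flag as the main obstacle: the claim that testing the width against the directions $(k,-\lfloor kb/c\rfloor)$ for $1\le k<m/a$, which confines $kb\bmod c$ to the interval $[m,\,c-m+ak]$, is ``incompatible with $c=m^2/a$ unless $a$ is small'' by a pigeonhole/three-distance argument. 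This is not carried out, and it is not evident that it can be: each constraint interval has length $c-2m+ak$, which is nonnegative and can be comparable to $c$, there are only about $m/a$ such constraints, and nothing in the proposal quantifies how the distribution of the residues $b,2b,\dots\pmod c$ contradicts them. Since everything downstream (the finite search and the $\SL_2(\bZ)$ sorting) depends on first obtaining a uniform bound on $m$, the proof is incomplete at its central point.

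For comparison, the paper obtains finiteness quite differently. It first uses the $\SL_2(\bZ)$ freedom to order the edge lattice lengths so that $a$ is the largest, forcing $a/m\ge 1/3$, and then splits into the three ranges $a/m>2/3$, $1/2<a/m\le 2/3$, and $1/3\le a/m\le 1/2$. In each range it combines the area, perimeter, ordering, and width conditions to pin down the ratio $b'/\gcd(b',c)\in\{2,3\}$ and to derive explicit numerical bounds (for instance $c-2m+a<18$ in the first case, $3<k<12$ in the second, $3\le b_0<c_0\le 12$ in the third), which reduce the problem to finite families of Diophantine equations solved by a short computer search. If you want to salvage your route, you would either need to prove the pigeonhole bound rigorously or replace it with an ordering-plus-case-analysis argument of the paper's type; note also that without imposing the edge-length ordering you will encounter several normal forms per orbit (as your $\{(0,0),(1,0),(10,16)\}$ example shows), which the paper's normalization avoids from the start.
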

	
	\begin{proof}

	Note that by definition of the area of $\Delta$ we have $m^2 = ac.$  Also, the lattice length of the sides of $\Delta$ are $a, \gcd (b, c),$ and $\gcd (a - b, c).$ Thus $a + \gcd (b, c) + \gcd (a-b, c) = m.$ 
	
	By assumption, the width of $\Delta$ is at least $m$. Let $\lambda = (1, -1)$ and $\nu = (1, 0).$ Then we have $w_{\lambda} = a - (b - c) = a - b + c$ and $w_{\nu} = \max \{a, b\}.$ Then $a - (b - c) \geq m$ and $\max \{a, b\} \geq m.$ But we know $a < m,$ so $\max \{a, b\}  = b \geq m.$ Rearranging, we get that $m - a \leq c - b \leq c - m.$
	
	Let $b' = c - b.$ Observe that we can impose any ordering on the lattice lengths of the sides of $\Delta$. Then we have the following conditions, where \eqref{ordering} is the ordering of side lengths we impose:
	\begin{equation} \label{area}
	    m^2 = ac
	\end{equation}
	\begin{equation} \label{sidesum}
	    a + \gcd (b', c) + \gcd(b'+ a, c) = m 
	\end{equation}
	\begin{equation} \label{ordering}
	    a \geq \gcd (b', c) \geq \gcd(b' + a, c)
	\end{equation}
	\begin{equation} \label{widthconditions}
	    m - a \leq b' \leq c - m.
	\end{equation}
	In particular, by the ordering encoded by $\eqref{sidesum}$ and $\eqref{ordering}$, we have $\frac{a}{m} \geq \frac{1}{3}.$ We proceed by cases, considering different possible values of $\frac{a}{m}.$
	
	\vspace{0.3cm}
	\textbf{Case 1:} $\frac{a}{m} > \frac{2}{3}.$ 
	
	We will prove an upper bound for an expression involving $a, m,$ and $c.$ First, observe that $\frac{c - m}{m - a} = \frac{m}{a} < \frac{3}{2}.$ Thus by \eqref{widthconditions} we have 
	\begin{equation} \label{bconditions}
	    m-a \leq b' < \frac{3}{2}(m-a).
	\end{equation}
	
	Also, by \eqref{sidesum} and \eqref{ordering} we have $m-a > \gcd (b', c) \geq \frac{m-a}{2}.$ Since by \eqref{widthconditions} we have $b' \geq m-a,$ we cannot have $b' = \gcd (b', c).$ But by \ref{bconditions} we also have $b' < \frac{3}{2}(m-a) \leq 3\gcd (b', c).$ Thus $b' = 2 \gcd (b', c).$
	
	Let $d = \gcd (b', c).$ Then $b' = 2d$ and $c = kd$ for some odd $k$ in $\bZ.$ By \eqref{widthconditions} we have 
	\begin{align*}
	    \frac{d}{m-a} \leq \half \cdot \frac{c-m}{m-a} = \half \cdot \frac{m}{a} < \half \cdot \frac{3}{2} = \frac{3}{4}.
	\end{align*}
	
	Then by \eqref{sidesum}, $\gcd (b' + a, c) > \frac{1}{4} (m-a).$	On the other hand, $\gcd(a, d) = 1$ implies $ \gcd (b' + a, c) = \gcd (2d + a, k).$ Thus we have $k > \frac{m-a}{4}.$
	
	Also, from condition \eqref{sidesum} we have $d + \gcd (b' + a, c) = m-a.$ Combining this with \eqref{ordering} we get $d \geq \frac{m-a}{2}.$
	
	Thus we obtain 
	\begin{equation} \label{upperbdprep}
	    c = kd > \frac{(m-a)^2}{8} = \frac{1}{8} \cdot a  (c - 2m + a).
	\end{equation}
	
	We also have $ \frac{c}{a} = \left( \frac{m}{a} \right)^2 < \frac{9}{4}.$ Combining this with \eqref{upperbdprep} we see $\frac{1}{8} (c - 2m + a) < \frac{c}{a} < \frac{9}{4}.$ Clearing denominators we obtain our promised upper bound:
	\begin{equation} \label{upperbound}
	    c - 2m + a < 18.
	\end{equation}
	
	Now, consider the following parameterization of the equation $ac = m^2$: $a = es^2, c = et^2,$ and $m = ets,$ where $e, t, s \in \bZ$ and $\gcd(t, s) = 1$.
	Then our inequality \eqref{upperbound} becomes $(t-s)^2e < 18.$ This leaves finitely many pairs $(t-s, e)$ to consider. 
	
	Let $x = t-s.$ Then inequality \eqref{widthconditions} becomes 
	\begin{equation} \label{newwidthcond}
	    esx \leq 2d \leq e(s + x)x.
	\end{equation}
	
	Thus we can write $2d = esx + p$ where $0 \leq p \leq ex^2 < 18.$ From the equation $c = kd$ we get
	\begin{equation} \label{maineq}
	    2e(s+x)^2 = k(esx + p).
	\end{equation}
	
	Equation \eqref{maineq} gives us a family of diophantine equations in $s$ and $k$ with a finite set of parameters $(e, x, p).$ We will proceed by solving this family of diophantine equations abstractly in terms of $e, x,$ and $p,$ and using a computer to substitute particular values of $(e, x, p).$
	
	We rearrange equation \eqref{maineq} into quadratic form with respect to $s$, obtaining $2es^2 + ex(4-k)s + 2ex^2 - pk = 0.$
	Solving for $s$, we obtain
	\begin{align*}
	    s = \frac{1}{4e}(-ex(4-k) \pm \sqrt{D}), \qquad D:= e^2x^2(4-k)^2 - 8e(2ex^2 - pk).
	\end{align*}
	
	Since $-ex(4-k) \pm \sqrt{D}$ is in $4 \bZ,$ we see that $\sqrt{D}$ must be in $\bZ.$ Let $y = \sqrt{D}.$ Then we solve 
	\begin{align*}
	    y^2 & = D\\
	    & = e^2x^2(4-k)^2 - 8e(2ex^2 - pk)\\
	    & =(ex)^2k^2 - 8e(ex^2 - p)k.
	\end{align*}
	
	Completing the square with respect to $k$ gives us
	\begin{equation*}
	    \left( exk - \left( ex - \frac{p}{x} \right) \right)^2 = y^2 + 16 \left( ex - \frac{p}{x} \right)^2.
	\end{equation*}
	
	Multiplying by $x^2$ on each side to clear denominators gives us $(ex^2k - 4(ex^2 - p))^2 = y^2x^2 + 16(ex^2 - p)^2.$ We can then factor the difference of squares to obtain
	\begin{equation} \label{findky}
	    (ex^2k - 4(ex^2 - p) - xy)(ex^2k - (ex^2 - p) + xy) = 16(ex^2 - p)^2.
	\end{equation}
	
	Now we simply need to plug in values of $e, x,$ and $p$ such that $0 \leq p \leq ex^2 < 18$ and factor the right hand side of \eqref{findky}. We will obtain a finite list of options for $k$ and $y$ for each choice of $(e, x, p)$ by solving two linear equations in two variables. We can further subject $(k, y)$ to the conditions that $k$ is odd and that $\gcd(a, d) = 1.$ Each choice of $(k, y)$ will give us two options for $s$ (corresponding to $\pm \sqrt{D}$). Thus we can completely check all possible triangles.
	
	Finally, by condition \eqref{sidesum}, we have $m - a - d = \gcd(2d + a, c).$ Without this condition, we have five candidate triangles. With this condition added, we have none. Sage code which executes the algorithm just described is included in Section~\ref{code1}.

	\vspace{0.3cm}
	\textbf{Case 2:} $\frac{1}{2} < \frac{a}{m} \leq \frac{2}{3}.$
	
	Observe that $\frac{c - m}{m - a} = \frac{m}{a} < 2.$ Thus by condition \eqref{widthconditions} we have $m-a \leq b' < 2(m-a).$ Also, by conditions \eqref{sidesum} and \eqref{ordering} we have $m-a > \gcd (b', c) \geq \frac{m-a}{2}.$ Thus we have $b' = 2d$ or $b' = 3d.$
	
	We will use the bounds on $\frac{a}{m}$ to bound possible $k,$ where $k = \frac{c}{d}$ is as in case 1. 
	
	By conditions \eqref{sidesum} and \eqref{ordering} we obtain $\frac{m}{2} > d \geq \frac{m}{6}.$ Re-arranging and substituting $k = \frac{c}{d}$, we obtain 
	\begin{equation*}
	   6 \left( \frac{c}{m} \right) \geq k > 2 \left( \frac{c}{m} \right). 
	\end{equation*}
	
	Now recall that $m^2 = ac.$ Thus $\frac{a}{m} \cdot \frac{c}{m} = 1.$ But we also have $\frac{1}{2} < \frac{a}{m} \leq \frac{2}{3}.$ Thus $ 2 > \frac{c}{m} 
	\geq \frac{3}{2}.$ Substituting these inequalities into our bounds on $k,$ we obtain
	\begin{equation} \label{kbounds}
	   12 > k > 3. 
	\end{equation}
	
	Suppose $b' = 2d.$ Since $(b', c) = d,$ we know $k$ must be odd. Thus we obtain a finite list of options: $k = 5, 7, 9, 11.$ 
	
	Next, suppose $b' = 3d.$ Then $k$ cannot be divisible by 3. Thus we obtain a finite list of options: $k = 4, 5, 7, 8, 10, 11.$ 
	
	Furthermore, by condition \eqref{sidesum} and $\gcd(a, d) = 1,$ we have that $d = m - a - \gcd(2d + a, k).$ Let $x = \gcd(2d + a, k).$ Then $x$ must divide $k.$ Thus we obtain a finite family of diophantine equations in $(a, m)$ with parameters $(k, x)$:
	\begin{equation} \label{areatri2}
	    m^2 = ac = kda = ka(m - a - x).
	\end{equation}
	
	Putting equation \eqref{areatri2} into quadratic form with respect to $\frac{m}{a}$ and solving gives us
	\begin{equation} \label{case2quadratic}
	    2 \left(\frac{m}{a}\right) = k \pm \sqrt{k^2 - 4k  - 4 \frac{kx}{a}}.
	\end{equation}
	
	But recall we have $\frac{1}{2} < \frac{a}{m} \leq \frac{2}{3},$ so 
	\begin{equation} \label{rootbounds}
	    3 \leq 2 \left( \frac{m}{a} \right) < 4.
	\end{equation}
	
	Since $k \geq 4$ by equation \eqref{kbounds} we have $k + \sqrt{k^2 - 4k  - 4 \frac{kx}{a}} \geq 4.$ Thus the larger root in \eqref{case2quadratic} will always fall outside the bound given in \eqref{rootbounds}, and we can restrict our attention to the smaller root.
	
	Suppose that $k = 4.$ Then equation \eqref{kbounds} has no real solutions. Thus we must have $k > 4.$
	
	Since $k>4$ we have that $2k - 9 > 0.$ Now, suppose that $a > \frac{4kx}{2k - 9}.$ Then $2k - 4 \frac{kx}{a} >9.$ Adding $k^2$ to both sides, we get $k^2 - 6k + 9 < k^2 - 4k - 4 \frac{kx}{a}.$ Thus $k-3 < \sqrt{k^2 - 4k  - 4 \frac{kx}{a}}.$ Finally, we see $3 > k - \sqrt{k^2 - 4k  - 4 \frac{kx}{a}},$ which falls outside of our bound in \eqref{rootbounds}. Thus if \eqref{rootbounds} if satisfied, we must have.
	\begin{equation} \label{case2condition}
	    a \leq \frac{4kx}{2k - 9}.
	\end{equation}
	
	We thus have a finite family of triples $(k, a, x),$ which we can use to solve the equation $m^2 = ka(m-a-x)$ for possible $m.$ We can then check for each $m$ obtained this way that $m$ is in an integer, and that $\frac{a}{m} > \frac{1}{2}.$ The code for this procedure is included in Section~\ref{code2}. 
	
	Running the code results in the candidates:
	\begin{align*} 
	    & k = 5,  \ a = 9, \ x = 1, \ m = 15\\
	    & k = 5, \ a = 45, \ x = 5, \ m = 75.
	\end{align*}
	
	Finally, we determine whether candidate triangles $(m, a, c)$ listed above are primitive lattice triangles satisfying the conditions \eqref{area} through \eqref{widthconditions}.
	
	\textbf{Candidate 1:} We calculate $d = m - a - \gcd(2d + a, k) = 5.$ Then $c = kd = 5 \cdot 5 = 25.$ Next, suppose $b' = 2d.$ Then $b = c - 2d = 25 - 2 \cdot 5 = 15.$ Thus we get the triangle $\{(0,0), (9, 0), (15, 25)\}.$ This triangle is $\text{SL}_2(\bZ)$ equivalent to $\{(0, 0),(5, 0), (18, 45)\},$ and the remaining conditions can be easily checked. 
	
	If we instead suppose that $b' = 3d,$ we get $b = 10.$ Thus we get the triangle $\{(0, 0), (9, 0), (10, 25)\}.$ However, this does not satisfy $\eqref{widthconditions},$ as $10 = b \ngeq m = 15.$
	
	\textbf{Candidate 2:} We calculate $d = m - a - \gcd(2d + a, k) = 25.$ Thus $c = kd = 125.$ Then $b' = 2d$ or $b' = 3d.$ Thus $b = 50$ or $75.$ In either case, the lattice triangle we obtain is not primitive, i.e. $\gcd (a, b, c) > 1.$ 
	
	
	\vspace{0.3cm}
	\textbf{Case 3:} $\frac{1}{3} \leq \frac{a}{m} \leq \frac{1}{2}.$ 
	
	Observe that $\gcd (b, c) = \gcd(b', c) = d.$ Let $b = b_0d$ and $c = c_0d.$ Recall that by condition \eqref{ordering}, we have $d \geq \gcd (b'+a, c).$ So by condition \eqref{sidesum} we get that $d \geq \frac{m-a}{2}.$ But since $a \leq \frac{m}{2}$ we get $d \geq \frac{m}{4}.$ Also, using $a \geq \frac{m}{3}$ and condition \eqref{area}, we get $c = \left( \frac{m}{a} \right)m \leq 3m.$ Thus we have 
	\begin{align*}
	    c_0d \leq 3m = 12 \left( \frac{m}{4} \right) \leq 12d.
	\end{align*}
	
	We also have that $b \geq m > 2d.$ Thus $b_0 \geq 3.$ To summarize, we've obtained the finite list of possibilities
	\begin{align*}
	     & 3 \leq b_0 < c_0 \leq 12.
	\end{align*}
	
	From condition \eqref{widthconditions} we also have $c_0 - b_0 \geq \frac{m}{d}.$ From condition \eqref{ordering} we have $m \geq 2a \geq 2d,$ so $\frac{m}{d} \leq 2.$ So $c_0 - b_0 \geq 2.$
	
	Let $x = \gcd (b' + a, c)\gcd (b - a, c).$ Since $1 = \gcd (a, b, c) = (a, d),$ we get
	\begin{align*}
	    x = \gcd (b_0d - a, c_0d) = \gcd(b_0d - a, c_0) | c_0.
	\end{align*}
	
	Thus we have a finite list of possible $x.$ By condition \eqref{sidesum}, we have $d = m - a - x.$ From this and condition \eqref{ordering} we get that $m \geq 2d = 2(m - a - x).$ Simplifying, we obtain
	\begin{equation*}
	    \frac{m}{2} \geq a \geq \frac{m-2x}{2}.
	\end{equation*}
	
	Thus we can consider the finite list $a = \frac{m}{2}, ..., \frac{m - 2x}{2}.$ Suppose $a = \frac{m-y}{2}$ where $0 \leq y \leq 2x.$ We also have $c = c_0d = c_0 (m - a - x).$ Substituting for $a$ and $c$ into the equation $ac = m^2,$ we obtain
	\begin{equation*}
	    \left( \frac{m - y}{2} \right) c_0 \left( m -  \frac{m - y}{2} - x \right) = m^2.
	\end{equation*}
	
	Simplifying, we obtain the following quadratic equation in $m$:
	
	\begin{equation} \label{case3quadratic}
	    (c_0 - 4) m^2 - 2c_0xm - c_0y(y - 2x) = 0.
	\end{equation}
	
	We can solve equation \eqref{case3quadratic} for $m$ for each of a finite family of parameters $(b_0, c_0, x, y)$ subject to the conditions $0 \leq y \leq 2x$, $x$ divides $c_0,$ $3 \leq b_0 \leq c_0 \leq 12,$ and $c_0 - b_0 \geq 2.$  Then we check whether the $m$ obtained is an integer and whether the lattice triangle is primitive. The code for the procedure described in this paragraph is given in Section \ref{code3}. Finally, we check that our candidate triples $(a, b, c)$ satisfy $\eqref{area}$ through \eqref{widthconditions}.
	
	
	By running the code in Section \ref{code3}, we obtain the triples 
	\begin{align*}
	    & m = 3: (1, 5, 9)\\
	    & m = 4: (2, 3, 8), (2, 5, 8) \\
	    & m = 6: (3, 8, 12) \\
	    & m = 10: (5, 12, 20).
	\end{align*}
	We eliminate $(1, 5, 9),$ since the width of the corresponding triangle is $2.$ We eliminate $(2, 3, 8),$ since the width of the corresponding triangle is $3.$ We eliminate $(3, 8, 12)$ because the number of lattice points on the boundary is $8.$ The other two triples satisfy all of conditions \eqref{area} through \eqref{widthconditions}.
	
	\begin{eg}
	We include an example of how the code in Section \ref{code3} will proceed for a particular choice of $(b_0, c_0, x, y)$.Suppose $b_0 = 5$ and $c_0 = 8.$ Then $x = 1, 2, 4, 8$ are possibilities. Equation \eqref{case3quadratic} gives us
	\begin{align*}
	    & 4m^2 - 16xm + 8(2xy-y^2) = 0.
	\end{align*}
	Suppose $x = 1$ and $y = 0.$ Then we obtain $m^2 - 4m = 0.$ Thus $m = 4.$ Then $a = \frac{m}{2} = 2.$ To calculate $d,$ we solve $4^2 = m^2 = c_0da = 8\cdot d \cdot 2.$ Thus $d = 1.$ 
	
	The code produces the candidate triple $(2, 5, 8).$ We manually check the lattice perimeter: $a + d + x = 2 + 1 + 1 = 4.$ Indeed, this triangle is one of the three triangles in the statement of Theorem \ref{trianglethm}.
	\end{eg}
	\end{proof}
	
	\section{Elliptic Fibrations} \label{fibrations}
	\begin{defn}
	An elliptic fibration is a morphism from an irreducible projective surface $S$ to a smooth curve, e.g. $\bP^1$, such that a general fiber is an elliptic curve. An elliptic fibration is called \emph{extremal} if it has a section and the Mordell-Weil group of sections is finite.
	\end{defn}
	
	\begin{theorem}
	Each of the three lattice triangles with vertex sets $\{(0, 0), (2,0), (5, 8)\},$ $\{(0, 0), (5, 0), (12,20)\},$  and $\{(0,0), (5, 0), (18, 45)\}$ of Theorem \ref{trianglethm} gives us a rational extremal elliptic fibration. The corresponding minimal rational elliptic fibrations are surfaces of types $X_{141}, X_{211},$ and $X_{211}$ respectively \cite{artebani}. That is, they have singular fibers of types $I_1^*I_4I_1, II^*I_1I_1,$ and $II^*I_1I_1$ in Kodaira's classification.
	\end{theorem}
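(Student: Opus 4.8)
The plan is to handle the three triangles by a single uniform argument: resolve $X_\Delta = \Bl_e \bP(\Delta)$ explicitly, exhibit a genus-one pencil, descend to the relatively minimal model $Z_\Delta$ by contracting the finitely many vertical $(-1)$-curves, and then read the singular fibres off the resulting dual graph, matching them with the tables of \cite{artebani}. Since $\bP(\Delta)$ is a projective toric surface, its singular points lie over the vertices of $\Delta$ and are cyclic quotient singularities, each resolved by a Hirzebruch--Jung string of rational curves whose self-intersections come from the continued fraction attached to the corresponding cone; this is a finite computation for each of the three triangles. Because $e$ is the identity of the torus, hence away from the singular locus, blowing it up commutes with resolving, so $\widetilde{X}_\Delta$ is $\widetilde{\bP(\Delta)}$ with one additional blow-up, with last exceptional curve $E$, $E^2 = -1$; the toric boundary of $\widetilde{\bP(\Delta)}$ is a cycle $\mathcal B$ of rational curves lying in $|-K_{\widetilde{\bP(\Delta)}}|$. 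Carrying this out yields in each case a smooth surface of Picard number $12$ together with the full configuration of its negative curves --- for $\Delta = \{(0,0),(2,0),(5,8)\}$ this is Figure~\ref{fig:triangle1intro} --- in particular the two extra $(-1)$-curves $\tilde g, \tilde h$ coming from curves on $X_\Delta$ itself.

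Next I would produce the fibration. Write $H$ for the hyperplane class of $\bP(\Delta)$; by (iii) of Definition~\ref{toricelliptic} the proper transform $C$ of $\Gamma$ has $C^2 = H^2 - m^2 = \operatorname{Vol}(\Delta) - m^2 = 0$ and $p_a(C) = 1$, so $C \cdot K_{\widetilde{X}_\Delta} = 0$ by adjunction. One checks that $\LL_\Delta(m)$ is a pencil, i.e. $\dim |C| = 1$ --- either directly for these three triangles, or via Riemann--Roch plus a vanishing statement for $K - C$ --- and, $C$ being irreducible, the pencil $|C|$ has no fixed part, hence is base-point free and defines a morphism $\widetilde{X}_\Delta \to \bP^1$ whose general fibre is a smooth genus-one curve. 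A section is read off the boundary: $\Gamma$ meets the toric divisor of an edge of lattice length $\ell$ in a length-$\ell$ scheme and misses the vertices, so each boundary edge curve $\widetilde{D}_i \cong \bP^1$ satisfies $\widetilde{D}_i \cdot C = \ell_i$; each of the three triangles has an edge of lattice length $1$ (the lengths $a$, $\gcd(b,c)$, $\gcd(a-b,c)$ sum to $m$), and that boundary curve, meeting a general fibre once, is a section. Since the resolution strings have self-intersections $\le -2$, the only vertical $(-1)$-curves are $\tilde g, \tilde h$; one checks they lie in fibres, so contracting them by Castelnuovo's criterion produces the relatively minimal model $Z_\Delta$ with $\rho(Z_\Delta) = 10$, $K_{Z_\Delta}^2 = 0$: a rational elliptic surface with section, $-K_{Z_\Delta}$ the fibre class.

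Now the singular fibres. The reducible ones are assembled from the images in $Z_\Delta$ of the vertical components of $\mathcal B$ (essentially the resolution strings) together with any other vertical negative curves; reading the explicit dual graph one identifies the connected configurations of Kodaira type that appear. For $\Delta = \{(0,0),(2,0),(5,8)\}$ one finds an $I_1^*$ configuration and an $I_4$ configuration, and as $\sum_v e(F_v) = e(Z_\Delta) = 12$ with $e(I_1^*) = 7$ and $e(I_4) = 4$, the remaining fibre is a single nodal $I_1$; for the other two triangles one finds a $II^*$ configuration ($e = 10$), leaving two nodal $I_1$'s. Extremality is then automatic: for a rational elliptic surface the Mordell--Weil rank equals $8 - \sum_v (m_v - 1)$ by Shioda--Tate, where $m_v$ is the number of components of the fibre over $v$, and this is $8 - (5 + 3) = 0$ for $I_1^* I_4 I_1$ and $8 - 8 = 0$ for $II^* I_1 I_1$; a rank-zero finitely generated Mordell--Weil group is finite, so the fibration is extremal. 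Comparing the fibre configurations with the tables of \cite{artebani} names $Z_\Delta$ as $X_{141}$, $X_{211}$, $X_{211}$ respectively.

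The substance of the argument, and the only genuinely delicate part, is the first two steps: getting the Hirzebruch--Jung resolutions right and --- above all --- tracking how $C$ meets the exceptional curves, equivalently bookkeeping the multiplicities of $\Gamma$ along the edges of $\Delta$ (which equal the lattice lengths), so as to be sure that the dual graph of negative curves is exactly the one drawn, that $|C|$ is a base-point-free pencil, and that $\tilde g, \tilde h$ are the vertical $(-1)$-curves. Establishing that $\LL_\Delta(m)$ is a pencil and locating $\tilde g, \tilde h$ requires genuine input, not just formula manipulation; once the dual graph is pinned down the contraction is a finite check and naming the fibre types amounts to reading a table.
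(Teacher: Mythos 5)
Your overall architecture matches the paper's: resolve $\Bl_e\bP(\Delta)$ torically, exhibit a genus-one pencil with a section coming from a lattice-length-one edge, contract the two extra vertical $(-1)$-curves $\tilde g,\tilde h$ to reach the relatively minimal model, identify the reducible fibres from the dual graph, and get the last $I_1$ (or two $I_1$'s) from $\sum e(F_v)=12$. Your extremality argument via Shioda--Tate ($\mathrm{rk}\,MW = 8-\sum(m_v-1)=0$) is a pleasant, more self-contained alternative to the paper's appeal to Table 1 of \cite{artebani}, and your observation that an irreducible member with $C^2=0$ forces the pencil to be base-point free is correct.

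However, there is a genuine gap at the foundation. You write ``by (iii) of Definition~\ref{toricelliptic} the proper transform $C$ of $\Gamma$ has $C^2=0$ and $p_a(C)=1$,'' i.e.\ you assume the existence of an irreducible curve $\Gamma\in\LL_\Delta(m)$ with multiplicity exactly $m$ at $e$ and Newton polygon equal to $\Delta$. But for these three triangles that existence is precisely the nontrivial content to be established -- Theorem~\ref{classificationthm} later relies on this theorem to conclude that the triangles actually give toric elliptic pairs -- and nothing in your argument produces $\Gamma$ or shows its general member is irreducible of genus one. The paper does this by explicit computation (Magma's \texttt{FindCurve}, a genus computation, and a Newton-polytope check), and separately gets the lower bound $h^0(X,C)\ge 2$ by writing down the rational member $g(x,y)=x^s(x^uy^v-1)^m$ supported on the width direction, with the upper bound from Lemma~3.2 of \cite{effcone}; your ``Riemann--Roch plus a vanishing statement for $K-C$'' is not carried out and is not obviously available here. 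A second, smaller omission: you assert that $\tilde g,\tilde h$ are the only extra vertical $(-1)$-curves but give no mechanism for finding $\tilde h$; the paper produces it by factoring the adjoint linear system $|K+C|$ (whose members are disjoint from $C$ by adjunction, hence vertical) and verifies $\tilde g^2=\tilde h^2=-1$ by an intersection-matrix computation. Without these two inputs the dual graphs, and hence the fibre types, are not actually pinned down.
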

	
	\begin{proof}
    We show that if $\Delta$ is one of the three triangles above, then $\LL_{\Delta}(m)$ is an elliptic fibration, i.e. a pencil which contains an elliptic curve. We then compute the minimal resolutions of each $\Bl_e \bP(\Delta)$ and find the minimal model of the corresponding smooth rational elliptic fibration, and compute its Kodaira type. Since $\Delta$ has a side of lattice length one, the toric boundary divisor corresponding to this side intersects the elliptic curve in exactly one point. Thus we have at least one section of the fibration. Table 1 in \cite{artebani} then shows that the elliptic fibration is extremal. 
    
    Suppose that $\Delta$ is one of our three triangles. Then $m$ is equal to the width of $\Delta.$ Let $(v, -u)$ be a vector which achieves $m.$ Then the curve $(x^uy^v-1)^m$ given by $m$ copies of the one parameter subgroup has multiplicity $m$ at $e.$ Now, let $s= b-mu$ and consider the curve $g(x, y) = x^s(x^uy^v-1)^m,$ which also has multiplicity $m$ at $e.$
    
    We claim that the exponents of monomials in $\text{Supp } g$ are in $\Delta.$ To see this, write $$g(x, y) = \sum_{i=1}^{m} (-1)^ix^s(x^uy^v)^i.$$
    So all of the exponents of monomials in $g$ lie along the line from $(s, 0)$ to $m \cdot (u, v) + (s, 0) = (b, mv).$ In each of our examples, $(a, 0) \cdot (v, -u) = m,$ so $c = \frac{m^2}{a} = \frac{amv}{a} = mv.$
    Thus the endpoints of the line segment are in the convex polygon $\Delta,$ so the entire line segment lies in $\Delta.$ We conclude that $g$ is in $\LL_{\Delta} (m).$ We will use the existence of $g$ to find curves on $\Bl_e \bP(\Delta)$ which we can contract to compute its minimal resolution.
    
    \begin{eg} \label{geg}
    Consider the triangle $\Delta = \{(0, 0), (2,0), (5, 8)\}$ with $m = 4$. By Computation~\ref{genus1} below, the width of $\Delta$ is achieved in the direction $(2, -1).$ We expand $g(x, y) = x(xy^2-1)^4 = x((xy^2)^4 - (xy^2)^3 + (xy^2)^2 - xy^2 + 1).$ One can verify visually that each of these monomials have exponents in $\Delta$, as in Figure~\ref{triangle1width}. 
    
    \begin{figure}[ht] 
    \centering
	\begin{tikzpicture}
    \foreach \x in {0,1,2,3,4,5, 6, 7, 8}
       \foreach \y in {0,1,2,3,4, 5, 6, 7, 8} 
          \draw[fill] (2/4*\x,2/4*\y) circle (0.5pt) coordinate (m-\x-\y);
    
    \draw (m-0-0) -- (m-2-0) -- (m-5-8) -- cycle;
    \draw[red] (m-1-0)--(m-5-8);
    \end{tikzpicture}
    \caption{The triangle $\Delta$ for $m=4.$ All monomials in $g$ are lattice points along the median.}
    \label{triangle1width}
    \end{figure}
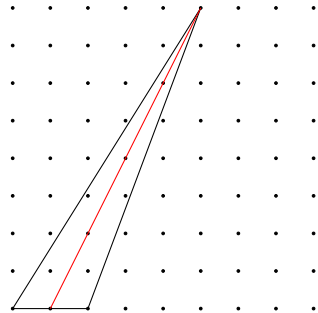
    \end{eg}
    
    Next, we compute using the Magma package ``non-polyhedral" which is available on Github \cite{lafacepkg} that for each of our surfaces $\bP(\Delta),$ there is an curve $\Gamma$ in $\LL_{\Delta}(m)$  whose Newton polytope coincides with $\Delta.$ We include the $m = 4$ case as an example.
    
    \begin{computation} \label{genus1}
    Let $\Delta = \{(0, 0), (2,0), (5, 8)\}.$ We find a curve $\Gamma$ on $X_\Delta$ which goes through the identity element of the torus with multiplicity $4$. Our $\Gamma$ is denoted $f$ in the computation below. Here the command ``Genus" gives the geometric genus of $f.$
    \begin{tbox}
    {\footnotesize
    \begin{verbatim}
    > pol:=Polytope([[0,0],[2,0],[5,8]]);
    > Width(pol);
    4 {
        (2, -1),
        (-2, 1)
    }
    > f:=FindCurve(pol, 4, Rationals());
    > f;
    Curve over Rational Field defined by
    9/4*x[1]^5*x[2]^8 - 8*x[1]^4*x[2]^6 + 15/2*x[1]^3*x[2]^4 + 2*x[1]^3*x[2]^3 +
        2*x[1]^2*x[2]^3 - 6*x[1]^2*x[2] + x[1]^2 - 6*x[1]*x[2] + 17/4*x[1] + 1
    > Genus(f);
    1 \end{verbatim}
    }
    \end{tbox}
    \end{computation}
    We know that $p_a(C) = 1$ by Proposition 4.2 of \cite{effcone}. Thus we have that $C$ is smooth, irreducible, and genus one, hence an elliptic curve.
    
    We also need to check that $\Gamma$ does not pass through the singularities of $\bP(\Delta).$ As observed in Section \ref{triangles}, this is equivalent to the Newton polytope of $\Gamma$ being equal to $\Delta.$ Again, we include the $m = 4$ case as an example.
    
    \begin{computation} \label{npolytope1}
    We compute the Newton polytope of $\Gamma.$
    \begin{tbox}
    {\footnotesize
    \begin{verbatim}
    > pol:=Polytope([[0,0],[2,0],[5,8]]);
    > f:=FindCurves(pol, 4, Rationals())[1];
    > Transpose(Matrix(Vertices(NPolytope(f))));
    [5 2 0]
    [8 0 0] \end{verbatim}
    }
    \end{tbox}
    \end{computation}
    
    Let $X = \Bl_e\bP(\Delta)),$ and $C$ be the proper transform of $\Gamma$. Then both $C$ and the proper transform $\tilde{g}$ of $g$ are in $H^0(X, C)$ by Claim \ref{gclaim}, and they are not the same curve, since one is an elliptic curve and the other is a rational curve. Thus for each of our surfaces, $h^0(X, C) \geq 2.$ By Lemma 3.2 in \cite{effcone}, $h^0(X, C) = 2.$ Thus $(X, C)$ gives an elliptic fibration. 
     
    To show that $(X, C)$ is extremal, we compute the minimal resolution $\widetilde{X}$ of $X,$ and the minimal elliptic fibration $Z$ of $\widetilde{X}.$ We recover the Kodaira type of $Z,$ showing that the fibration is extremal.
     
    \vspace{0.3cm}
    \textbf{Triangle 1:} $\Delta = \{(0, 0), (2,0), (5, 8)\}$ with $m = 4$.
    
    We compute the minimal resolution of $\widetilde{X}$ of $X,$ which is given by Figure~\ref{fig:triangle1}. The number on each curve in Figure~\ref{fig:triangle1} indicates its self-intersection. The lines in bold are the curves of self intersection $-1$ given by the sides of $\Delta.$ 
    
    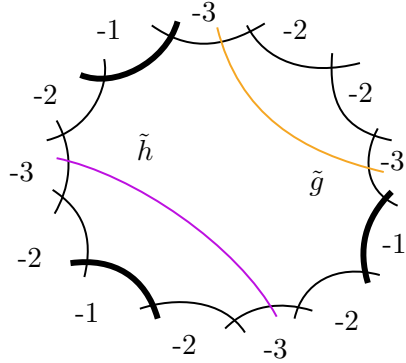
\begin{figure}[ht]
        \centering
        \tikzset{every picture/.style={line width=0.75pt}} 
\begin{tikzpicture}[x=0.75pt,y=0.75pt,yscale=-1,xscale=1]

\draw    (227,48) .. controls (244,60) and (261,61) .. (284,47) ;
\draw    (275,48) .. controls (284,66) and (305,74) .. (332,67) ;
\draw    (318,62) .. controls (316,87) and (323,101) .. (348,106) ;
\draw    (341,100) .. controls (334,113) and (333,131) .. (351,139) ;
\draw [line width=2.25]    (348,132) .. controls (334,146) and (331,161) .. (336,178) ;
\draw    (342,174) .. controls (320,168) and (303,178) .. (299,196) ;
\draw    (264,201) .. controls (276,189) and (292,187) .. (308,192) ;
\draw [line width=2.25]    (191,73) .. controls (208,80) and (234,67) .. (240,46) ;
\draw    (174,106) .. controls (195,101) and (206,80) .. (203,65) ;
\draw    (178,143) .. controls (185,131) and (189,114) .. (179,96) ;
\draw    (193,175) .. controls (198,154) and (192,141) .. (177,131) ;
\draw [line width=2.25]    (186,168) .. controls (208,165) and (224,175) .. (230,196) ;
\draw    (221,191) .. controls (239,185) and (263,186) .. (274,203) ;
\draw [color={rgb, 255:red, 189; green, 16; blue, 224 }  ,draw opacity=1 ]   (179,115) .. controls (217.5,123) and (270.5,159) .. (290,195) ;
\draw [color={rgb, 255:red, 245; green, 166; blue, 35 }  ,draw opacity=1 ]   (260,49) .. controls (271,93) and (302,112) .. (344,122) ;

\draw (213.5,50) node   [align=left] {\begin{minipage}[lt]{21.08pt}\setlength\topsep{0pt}
\mbox{-}1
\end{minipage}};
\draw (357.5,158) node   [align=left] {\begin{minipage}[lt]{21.08pt}\setlength\topsep{0pt}
\mbox{-}1
\end{minipage}};
\draw (202.5,193) node   [align=left] {\begin{minipage}[lt]{21.08pt}\setlength\topsep{0pt}
\mbox{-}1
\end{minipage}};
\draw (181.5,83) node   [align=left] {\begin{minipage}[lt]{21.08pt}\setlength\topsep{0pt}
\mbox{-}2
\end{minipage}};
\draw (251.5,209) node   [align=left] {\begin{minipage}[lt]{21.08pt}\setlength\topsep{0pt}
\mbox{-}2
\end{minipage}};
\draw (173.5,165) node   [align=left] {\begin{minipage}[lt]{21.08pt}\setlength\topsep{0pt}
\mbox{-}2
\end{minipage}};
\draw (307.5,50) node   [align=left] {\begin{minipage}[lt]{21.08pt}\setlength\topsep{0pt}
\mbox{-}2
\end{minipage}};
\draw (356.5,116) node   [align=left] {\begin{minipage}[lt]{21.08pt}\setlength\topsep{0pt}
\mbox{-}3
\end{minipage}};
\draw (340.5,82) node   [align=left] {\begin{minipage}[lt]{21.08pt}\setlength\topsep{0pt}
\mbox{-}2
\end{minipage}};
\draw (333.5,197) node   [align=left] {\begin{minipage}[lt]{21.08pt}\setlength\topsep{0pt}
\mbox{-}2
\end{minipage}};
\draw (261.5,41) node   [align=left] {\begin{minipage}[lt]{21.08pt}\setlength\topsep{0pt}
\mbox{-}3
\end{minipage}};
\draw (169.5,122) node   [align=left] {\begin{minipage}[lt]{21.08pt}\setlength\topsep{0pt}
\mbox{-}3
\end{minipage}};
\draw (297.5,211) node   [align=left] {\begin{minipage}[lt]{21.08pt}\setlength\topsep{0pt}
\mbox{-}3
\end{minipage}};
\draw (305,120) node [anchor=north west][inner sep=0.75pt]   [align=left] {$\tilde{g}$};
\draw (218,101) node [anchor=north west][inner sep=0.75pt]   [align=left] {$\tilde{h}$};

\end{tikzpicture}
        \caption{Intersections of curves on $\widetilde{X_\Delta}$, where $\Delta = \{(0, 0), (2,0), (5, 8)\}$}
        \label{fig:triangle1}
    \end{figure}
    
    The curve $\tilde{g}$ in Figure~\ref{fig:triangle1} is the proper transform of $g(x, y) = x(xy^2-1)^4.$ The curve $\tilde{h}$ is the proper transform of the curve $h(x, y) = x^2y^3 - 3xy + x + 1.$  We obtain $h$ by factoring the adjoint linear system $|K + C|$ into prime components as in Computation~\ref{factorkc}, where $K$ is the canonical divisor on $X.$
    
    \pagebreak
    \begin{computation} \label{factorkc}
    Finding the factorization of $|K + C|$ for $\Delta = \{(0, 0), (2,0), (5, 8)\}.$
    \begin{tbox}
    {\footnotesize
    \begin{verbatim}
    > pol := Polytope([[0,0],[2,0],[5,8]]);
    > PolsAdjSys(pol);
    [x[1]*x[2]^2 - 1, x[1]^2*x[2]^3 - 3*x[1]*x[2] + x[1] + 1]\end{verbatim}
    }
    \end{tbox}
    \end{computation}
    
    By adjunction, $(K+C) \cdot C = 0,$ so $\tilde{h}$ and $C$ are disjoint. In Computation~\ref{selfint} we check using Magma that $\tilde{g}$ and $\tilde{h}$ are curves of self-intersection $-1$ on $\widetilde{X}.$ This will later allow us to contract $\tilde{g}$ and $\tilde{h}$ using Castelnuovo's contraction criterion.
    
    \begin{computation} \label{selfint}
    Computing the self-intersection of $C,$ $\tilde{g},$ and $\tilde{h}$ on $\widetilde{X_\Delta}.$ The rows in the matrix return by AdjSys represent the classes of $C,$ $\tilde{g},$ and $\tilde{h},$ respectively, in the Picard group of $\widetilde{X_\Delta}$. The last number in each row gives the multiplicity of the curve at $e.$
    \begin{tbox}
    {\footnotesize
    \begin{verbatim}
    > adjsysmatrix := Matrix(Rationals(), #AdjSys(pol), #AdjSys(pol)[1], AdjSys(pol));
    > adjsysmatrix;
    [ 8  5  2  1  0  0  0  0  0  0  1  2  5 -4]
    [ 2  1  0  0  0  0  0  0  0  0  0  0  1 -1]
    [ 3  2  1  1  1  0  0  0  0  1  1  1  2 -2]
    > matrix := imatS(pol);
    > adjsysmatrix*matrix*Transpose(adjsysmatrix);
    [ 0  0  0]
    [ 0 -1  0]
    [ 0  0 -1]\end{verbatim}
    }
    \end{tbox}
    \end{computation}
    
    We also use Magma to calculate the intersections between $\tilde{g}, \tilde{h},$ and the proper transforms of the toric boundary divisors of $X_\Delta,$ which are depicted in Figure~\ref{fig:triangle1}.
    
    \begin{computation} \label{imats}
    The intersection matrix of the minimal resolution $\widetilde{X_\Delta}$ of $X_\Delta.$
    \begin{tbox}
    {\footnotesize
    \begin{verbatim}
    > Transpose(Matrix(Reorder(Rays(Resolution(NormalFan(pol))))));
    [ 0 -1 -2 -5 -8 -3 -1  1  3  8  5  2  1]
    [ 1  1  1  2  3  1  0 -1 -2 -5 -3 -1  0]
    > imatS(pol);
    [-1  1  0  0  0  0  0  0  0  0  0  0  1  0]
    [ 1 -2  1  0  0  0  0  0  0  0  0  0  0  0]
    [ 0  1 -3  1  0  0  0  0  0  0  0  0  0  0]
    [ 0  0  1 -2  1  0  0  0  0  0  0  0  0  0]
    [ 0  0  0  1 -1  1  0  0  0  0  0  0  0  0]
    [ 0  0  0  0  1 -3  1  0  0  0  0  0  0  0]
    [ 0  0  0  0  0  1 -2  1  0  0  0  0  0  0]
    [ 0  0  0  0  0  0  1 -2  1  0  0  0  0  0]
    [ 0  0  0  0  0  0  0  1 -3  1  0  0  0  0]
    [ 0  0  0  0  0  0  0  0  1 -1  1  0  0  0]
    [ 0  0  0  0  0  0  0  0  0  1 -2  1  0  0]
    [ 0  0  0  0  0  0  0  0  0  0  1 -3  1  0]
    [ 1  0  0  0  0  0  0  0  0  0  0  1 -2  0]
    [ 0  0  0  0  0  0  0  0  0  0  0  0  0 -1]\end{verbatim}
    }
    \end{tbox}
    \end{computation}
    
    Each ray of the normal fan corresponds to a divisor in $\widetilde{X_\Delta}$. The toric boundary divisors corresponding to the sides of $\Delta$ are given by $(0, 1), (8, -3),$ and $(8, -5)$ in the normal fan, which correspond to the curves of self-intersection $-1$ in the $14 \times 14$ intersection matrix given by Computation \ref{imats}. We see from the exponents of $g$ that $\tilde{g}$ intersects the curves given by $\pm (2, -1)$ in the normal fan, which correspond to rows $6$ and $9$ in the intersection matrix, thus allowing us to place $\tilde{g}$ in Figure~\ref{fig:triangle1}.
    
    We can also see from the exponents of $h$ that $\tilde{h}$ intersects the curves in the minimal resolution which given by $(2, 3)$ and $(1, 1)$ in the normal fan, thus allowing us to place it in Figure~\ref{fig:triangle1}.
    
    The elliptic curve $C$, which is not pictured in Figure~\ref{fig:triangle1}, passes through each boundary divisor given by a side of $\Delta$ with multiplicity equal to the lattice length of that side.
    
    Since $\tilde{g}$ and $\tilde{h}$ each have self-intersection~$-1$ on $X_{\Delta}$,  we can contract them using Castelnuovo's contraction criterion. We obtain a configuration of $-2$ curves with Dynkin diagram $D_5$ from contracting $\tilde{g}$, and $A_3$ by contracting $\tilde{h}$, which correspond to singular fibers of Kodaira type $I_1^*$ and $I_4.$ These fibers are pictured in Figure~\ref{fig:singfibers}. Since we have an elliptic fibration, the sum of the Euler characteristics of the singular fibers must be 12. So we must also have a singular fiber of type $I_1.$ Thus the Kodaira type is $X_{141},$ and consequently the elliptic fibration is extremal \cite{artebani}.

     
     \vspace{0.3cm}
     \textbf{Triangle 2:} $\Delta = \{(0, 0), (5,0), (12, 20)\}$ with $m = 10$.
     
     As in Computation \ref{imats}, we obtain Figure~\ref{fig:triangle2} by computing the normal fan and intersection matrix of of $\Delta.$ By a Magma calculation, the width of $\Delta$ is achieved in the direction $(2, -1).$  The curve $\tilde{g}$ in Figure~\ref{fig:triangle2} is the proper transform of $g(x, y) = x^2(xy^2-1)^{10}.$
     
    \begin{figure}[ht]
        \centering
        \begin{tikzpicture}[x=0.75pt,y=0.75pt,yscale=-1,xscale=1]

\draw [line width=2.25]    (250,45) .. controls (263,53.5) and (285,53.5) .. (296,43.5) ;
\draw    (360,108.5) .. controls (355,117.5) and (355,137.5) .. (362,150.5) ;
\draw    (184,104.5) .. controls (189,115.5) and (187,132.5) .. (183,143.5) ;
\draw [line width=2.25]    (258,205.86) .. controls (271,198.65) and (291,197.5) .. (304,205.5) ;
\draw    (284,43) .. controls (291,52.5) and (315,62.5) .. (332,60.5) ;
\draw    (322,57.5) .. controls (325,69.5) and (338,85.5) .. (354,88.5) ;
\draw    (344,80.5) .. controls (345,91.5) and (354,109.5) .. (364,114.5) ;
\draw    (219,58.5) .. controls (228,61.5) and (255,55.5) .. (261,43.5) ;
\draw    (195,82.5) .. controls (207,81.5) and (229,67.5) .. (232,54.5) ;
\draw [line width=2.25]    (178,112.5) .. controls (191,111.5) and (204,92.5) .. (205,77.5) ;
\draw    (183,133.5) .. controls (189,137.5) and (200,156.5) .. (199,175.5) ;
\draw    (196,167.5) .. controls (206,168.5) and (226,184.5) .. (225,197.5) ;
\draw    (221,191.5) .. controls (231,192.5) and (256,198.5) .. (264,208.5) ;
\draw    (365,141.5) .. controls (360,146.5) and (347,166.5) .. (351,178.5) ;
\draw    (334,194.5) .. controls (323,189.5) and (303,194.5) .. (296,206.5) ;
\draw    (357,171.5) .. controls (344,174.5) and (331,182.5) .. (327,201.5) ;
\draw [color={rgb, 255:red, 189; green, 16; blue, 224 }  ,draw opacity=1 ]   (210,72) .. controls (230,88.5) and (311,93.5) .. (337,72.5) ;
\draw [color={rgb, 255:red, 245; green, 166; blue, 35 }  ,draw opacity=1 ]   (181,126) .. controls (218,130.5) and (248,179.5) .. (243,205.5) ;

\draw (184,93.25) node   [align=left] {\begin{minipage}[lt]{21.76pt}\setlength\topsep{0pt}
\mbox{-}1
\end{minipage}};
\draw (288,222.25) node   [align=left] {\begin{minipage}[lt]{21.76pt}\setlength\topsep{0pt}
\mbox{-}1
\end{minipage}};
\draw (276,34.25) node   [align=left] {\begin{minipage}[lt]{21.76pt}\setlength\topsep{0pt}
\mbox{-}1
\end{minipage}};
\draw (241,41.75) node   [align=left] {\begin{minipage}[lt]{21.76pt}\setlength\topsep{0pt}
\mbox{-}2
\end{minipage}};
\draw (172,131.25) node   [align=left] {\begin{minipage}[lt]{21.76pt}\setlength\topsep{0pt}
\mbox{-}2
\end{minipage}};
\draw (179,164.75) node   [align=left] {\begin{minipage}[lt]{21.76pt}\setlength\topsep{0pt}
\mbox{-}2
\end{minipage}};
\draw (198,196.75) node   [align=left] {\begin{minipage}[lt]{21.76pt}\setlength\topsep{0pt}
\mbox{-}2
\end{minipage}};
\draw (208,60.75) node   [align=left] {\begin{minipage}[lt]{21.76pt}\setlength\topsep{0pt}
\mbox{-}3
\end{minipage}};
\draw (242,222.61) node   [align=left] {\begin{minipage}[lt]{21.76pt}\setlength\topsep{0pt}
\mbox{-}7
\end{minipage}};
\draw (316,43.75) node   [align=left] {\begin{minipage}[lt]{21.76pt}\setlength\topsep{0pt}
\mbox{-}2
\end{minipage}};
\draw (322,215.75) node   [align=left] {\begin{minipage}[lt]{21.76pt}\setlength\topsep{0pt}
\mbox{-}2
\end{minipage}};
\draw (355,198.75) node   [align=left] {\begin{minipage}[lt]{21.76pt}\setlength\topsep{0pt}
\mbox{-}2
\end{minipage}};
\draw (378,167.25) node   [align=left] {\begin{minipage}[lt]{21.76pt}\setlength\topsep{0pt}
\mbox{-}2
\end{minipage}};
\draw (380,131.25) node   [align=left] {\begin{minipage}[lt]{21.76pt}\setlength\topsep{0pt}
\mbox{-}2
\end{minipage}};
\draw (374,96.75) node   [align=left] {\begin{minipage}[lt]{21.76pt}\setlength\topsep{0pt}
\mbox{-}2
\end{minipage}};
\draw (356,69.75) node   [align=left] {\begin{minipage}[lt]{21.76pt}\setlength\topsep{0pt}
\mbox{-}3
\end{minipage}};
\draw (239,139) node [anchor=north west][inner sep=0.75pt]   [align=left] {$\tilde{g}$};
\draw (272,96) node [anchor=north west][inner sep=0.75pt]   [align=left] {$\tilde{h}$};

\end{tikzpicture}
        \caption{Intersections of curves on $\widetilde{X_\Delta}$, with $\Delta = \{(0, 0), (5,0), (12, 20)\}$}
        \label{fig:triangle2}
    \end{figure}
    
    The curve $\tilde{h}$ in Figure~\ref{fig:triangle2} is the proper transform of $h(x, y) = x^2y^3 - 3xy + x + 1.$ As in the Triangle 1 case, $h(x, y)$ is a curve on $\bP(\Delta)$ which is obtained by factoring the adjoint linear system $|K + C|$ (see Computation \ref{factorkc2}). 
    
    \begin{computation} \label{factorkc2}
    Finding the factorization of $|K + C|$ for $\Delta = \{(0, 0), (5,0), (12, 20)\}.$
    \begin{tbox}
    {\footnotesize
    \begin{verbatim}
    > pol := Polytope([[0,0],[5,0],[12,20]]);
    > PolsAdjSys(pol);
    [x[1]*x[2]^2 - 1, x[1]^2*x[2]^3 - 3*x[1]*x[2] + x[1] + 1]\end{verbatim}
    }
    \end{tbox}
    \end{computation}
    The computation that $\tilde{g}$ and $\tilde{h}$ are disjoint and each have self-intersection $-1$ is identical to that of Computation \ref{selfint}, with the vertices of ``pol" replaced by the vertices of Triangle 2.
    
    Again, we can contract $\tilde{g}$ and $\tilde{h}$ using Castelnuovo's contraction criterion. We obtain a configuration of $-2$ curves with Dynkin diagram $E_8$ by contracting $\tilde{g}$. Similarly, we obtain a configuration of curves with Dynkin diagram $A_1$ by contracting $\tilde{h}$. These correspond to singular fibers of Kodaira type $II^*$ and $I_1,$ respectively. Thus the Kodaira type is $X_{211},$ and consequently the elliptic fibration is extremal \cite{artebani}.
     
    \vspace{0.3cm}
    \textbf{Triangle 3:} $\Delta = \{(0, 0), (5,0), (18, 45)\}$ with $m = 15$.
     
    By a Magma calculation, the width of $\Delta$ is achieved in the direction $(3, -1).$ The curve $\tilde{g}$ in Figure~\ref{fig:triangle3} is the proper transform of $g(x, y)  = x^3(xy^5-1)^{15}.$ As in Computation \ref{imats}, we obtain Figure~\ref{fig:triangle2} by computing the normal fan and intersection matrix of of $\Delta.$
    
     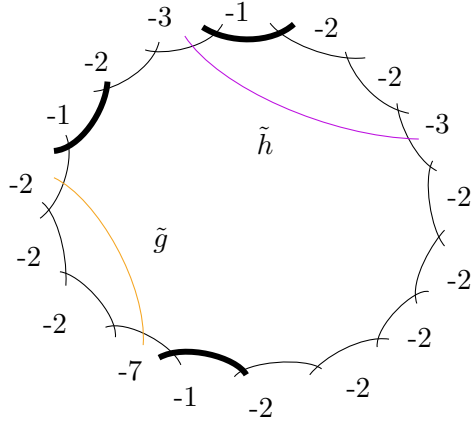
\begin{figure}[ht]
        \centering
        \begin{tikzpicture}[x=0.75pt,y=0.75pt,yscale=-1,xscale=1]

\draw [line width=2.25]    (250,45) .. controls (263,53.5) and (285,53.5) .. (296,43.5) ;
\draw    (366,112.5) .. controls (361,121.5) and (365,147.5) .. (372,155.5) ;
\draw    (181,99.5) .. controls (186,110.5) and (179,133.5) .. (168,141.5) ;
\draw [line width=2.25]    (228,211.86) .. controls (241,204.65) and (267,211.5) .. (272,220.5) ;
\draw    (284,43) .. controls (291,52.5) and (315,62.5) .. (332,60.5) ;
\draw    (322,57.5) .. controls (325,69.5) and (338,85.5) .. (354,88.5) ;
\draw    (348,83.5) .. controls (349,94.5) and (358,112.5) .. (368,117.5) ;
\draw    (221,56.5) .. controls (230,59.5) and (254,55.5) .. (260,43.5) ;
\draw    (195,77.5) .. controls (207,76.5) and (225,65.5) .. (228,52.5) ;
\draw [line width=2.25]    (175,107.5) .. controls (188,106.5) and (201,87.5) .. (202,72.5) ;
\draw    (169,133.5) .. controls (175,137.5) and (182,156.5) .. (181,175.5) ;
\draw    (178,168.5) .. controls (188,169.5) and (207,189.5) .. (206,201.5) ;
\draw    (201,196.5) .. controls (212,192.5) and (235,205.5) .. (239,215.5) ;
\draw    (372,147.5) .. controls (367,152.5) and (354,172.5) .. (358,184.5) ;
\draw    (344,202.5) .. controls (330,200.5) and (311,211.5) .. (304,220.5) ;
\draw    (364,178.5) .. controls (357,176.5) and (337,195.5) .. (338,206.5) ;
\draw    (310,217.5) .. controls (302,211.5) and (272,213.5) .. (266,221.5) ;
\draw [color={rgb, 255:red, 189; green, 16; blue, 224 }  ,draw opacity=1 ]   (241,49.5) .. controls (258,75.5) and (314,100.5) .. (359,101.5) ;
\draw [color={rgb, 255:red, 245; green, 166; blue, 35 }  ,draw opacity=1 ]   (175,121) .. controls (197,131.5) and (222,175.5) .. (220,205.5) ;

\draw (276,37.25) node   [align=left] {\begin{minipage}[lt]{21.76pt}\setlength\topsep{0pt}
\mbox{-}1
\end{minipage}};
\draw (186,89.25) node   [align=left] {\begin{minipage}[lt]{21.76pt}\setlength\topsep{0pt}
\mbox{-}1
\end{minipage}};
\draw (205,62.25) node   [align=left] {\begin{minipage}[lt]{21.76pt}\setlength\topsep{0pt}
\mbox{-}2
\end{minipage}};
\draw (167,123.75) node   [align=left] {\begin{minipage}[lt]{21.76pt}\setlength\topsep{0pt}
\mbox{-}2
\end{minipage}};
\draw (171,160.75) node   [align=left] {\begin{minipage}[lt]{21.76pt}\setlength\topsep{0pt}
\mbox{-}2
\end{minipage}};
\draw (222,218.25) node   [align=left] {\begin{minipage}[lt]{21.76pt}\setlength\topsep{0pt}
\mbox{-}7
\end{minipage}};
\draw (353,69.75) node   [align=left] {\begin{minipage}[lt]{21.76pt}\setlength\topsep{0pt}
\mbox{-}2
\end{minipage}};
\draw (184,194.75) node   [align=left] {\begin{minipage}[lt]{21.76pt}\setlength\topsep{0pt}
\mbox{-}2
\end{minipage}};
\draw (288,237.25) node   [align=left] {\begin{minipage}[lt]{21.76pt}\setlength\topsep{0pt}
\mbox{-}2
\end{minipage}};
\draw (321,45.25) node   [align=left] {\begin{minipage}[lt]{21.76pt}\setlength\topsep{0pt}
\mbox{-}2
\end{minipage}};
\draw (337,226.75) node   [align=left] {\begin{minipage}[lt]{21.76pt}\setlength\topsep{0pt}
\mbox{-}2
\end{minipage}};
\draw (377,93.75) node   [align=left] {\begin{minipage}[lt]{21.76pt}\setlength\topsep{0pt}
\mbox{-}3
\end{minipage}};
\draw (237,39.75) node   [align=left] {\begin{minipage}[lt]{21.76pt}\setlength\topsep{0pt}
\mbox{-}3
\end{minipage}};
\draw (368,201.75) node   [align=left] {\begin{minipage}[lt]{21.76pt}\setlength\topsep{0pt}
\mbox{-}2
\end{minipage}};
\draw (388,172.25) node   [align=left] {\begin{minipage}[lt]{21.76pt}\setlength\topsep{0pt}
\mbox{-}2
\end{minipage}};
\draw (388,130.75) node   [align=left] {\begin{minipage}[lt]{21.76pt}\setlength\topsep{0pt}
\mbox{-}2
\end{minipage}};
\draw (250,231.25) node   [align=left] {\begin{minipage}[lt]{21.76pt}\setlength\topsep{0pt}
\mbox{-}1
\end{minipage}};
\draw (276,94) node [anchor=north west][inner sep=0.75pt]   [align=left] {$\tilde{h}$};
\draw (224,145) node [anchor=north west][inner sep=0.75pt]   [align=left] {$\tilde{g}$};

\end{tikzpicture}
        \caption{Intersections of curves on $\widetilde{X_\Delta}$, with $\Delta = \{(0, 0), (5,0), (18, 45)\}$}
        \label{fig:triangle3}
    \end{figure}
    The curve $\tilde{h}$ in Figure~\ref{fig:triangle3} is the proper transform of $h(x, y) = x^3y^7 - 2x^2y^5 - x^2y^4 + 5xy^2 - 3xy + x - 1,$ which is obtained by factoring the adjoint linear system $|K + C|$ (see Computation \ref{factorkc3}). The computation that $\tilde{g}$ and $\tilde{h}$ are disjoint and each self-intersection $-1$ is identical to that of Computation \ref{selfint}, with the vertices of ``pol" replaced by the vertices of Triangle 3.
    
    \begin{computation} \label{factorkc3}
    Finding the factorization of $K + C$ for $\Delta = \{(0, 0), (5,0), (18, 45)\}.$
    \begin{tbox}
    {\footnotesize
    \begin{verbatim}
    > pol := Polytope([[0,0],[5,0],[18,45]]);
    > PolsAdjSys(pol);
    [x[1]*x[2]^3 - 1, x[1]^3*x[2]^7 - 2*x[1]^2*x[2]^5 - x[1]^2*x[2]^4 + 5*x[1]*x[2]^2 
        - 3*x[1]*x[2] + x[1] - 1]\end{verbatim}
    }
    \end{tbox}
    \end{computation}
    
    Again, we can contract $\tilde{g}$ and $\tilde{h}$ using Castelnuovo's contraction criterion. We obtain $E_8$ by contracting $\tilde{g}$, and $A_1$ by contracting $\tilde{h}$, which correspond to singular fibers of Kodaira type $II^*$ and $I_1,$ respectively. Thus the Kodaira type is $X_{211},$ as in Triangle 2, and consequently the elliptic fibration is extremal \cite{artebani}.
	\end{proof}
	
	Now that we know that $(X_\Delta, C)$ are elliptic pairs for each of our triangles $\Delta$,  we can combine this result with Theorem \ref{trianglethm} to completely classify all toric elliptic pairs of Picard number two. 
	
	\begin{theorem} \label{classificationthm}
	Suppose that $(X, C)$ is a toric elliptic pair, where $X = \textup{Bl}_e(\bP(\Delta)$ for some lattice triangle $\Delta.$ Then $\Delta$ is one of $\{(0, 0), (2,0), (5, 8)\}$ with $m = 4,$ $\{(0, 0),(5, 0), (12,20)\}$ with $m = 10,$ and $\{(0, 0),(5, 0), (18, 45)\}$ with $m = 15.$
	\end{theorem}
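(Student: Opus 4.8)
The plan is to reduce to the primitive case, invoke Theorem~\ref{trianglethm}, and then rule out dilates using the elliptic fibration constructed in Section~\ref{fibrations}. First I would record the combinatorial constraints: if $(X,C)$ is a toric elliptic pair with $X=\Bl_e\bP(\Delta)$, then (i) and (ii) of Definition~\ref{toricelliptic} give $\textup{Vol}(\Delta)=m^2$ with $m=|\partial\Delta\cap\bZ^2|$, and Lemma~\ref{widthlemma} gives that the width of $\Delta$ is $\geq m$. Translating a vertex of $\Delta$ to the origin changes neither $\bP(\Delta)$ nor $e$, so I may write $\Delta=d\Delta_0$ with $\Delta_0$ a primitive lattice triangle and $d\geq 1$ maximal such that $\tfrac1d\Delta$ is a lattice triangle. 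Dilating by $d$ scales the normalized area by $d^2$, the number of boundary lattice points by $d$, and the width by $d$, so $\Delta_0$ satisfies $\textup{Vol}(\Delta_0)=m_0^2$, $|\partial\Delta_0\cap\bZ^2|=m_0$ and width $\geq m_0$ with $m_0=m/d$. Theorem~\ref{trianglethm} then forces $\Delta_0$ to be $\SL_2(\bZ)$-equivalent to one of $\{(0,0),(2,0),(5,8)\}$, $\{(0,0),(5,0),(12,20)\}$, or $\{(0,0),(5,0),(18,45)\}$.

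It remains to show $d=1$, which is where the real content lies, since the constraints above are scaling-invariant: every dilate $d\Delta_0$ still satisfies (i), (ii) and the width bound, so the elimination must use the irreducibility demanded by condition (iii). I would argue as follows. Because $d\Delta_0$ and $\Delta_0$ have the same normal fan, $\bP(d\Delta_0)\cong\bP(\Delta_0)$ compatibly with $e$, so $X\cong X_{\Delta_0}:=\Bl_e\bP(\Delta_0)$. Let $H_0$ be the hyperplane class of $\bP(\Delta_0)$ and let $C_0$ be the elliptic curve of the pair $(X_{\Delta_0},C_0)$ constructed in Section~\ref{fibrations}, so $\pi^*H_0\sim C_0+m_0E$. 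Since $C\in|dH_0|$ has multiplicity $dm_0$ at $e$, we get $[C]=\pi^*(dH_0)-dm_0E=d\,[C_0]$, so $C\neq C_0$ once $d\geq 2$. Now $C_0$ is irreducible with $C_0^2=0$, and on the minimal resolution (where $C_0$ and $C$, being in the smooth locus, survive as isomorphic copies with unchanged self-intersection) $|C_0|$ is a pencil with no fixed component — using $h^0=2$ from Lemma~3.2 of \cite{effcone} — hence, as $C_0^2=0$, base-point-free: it is an elliptic fibration $\phi$ with general fibre $C_0$. Since $C\cdot C_0=d\,C_0^2=0$ and $C\neq C_0$, the curve $C$ is disjoint from a general fibre, so $\phi$ is constant on $C$ and $C$ lies in a single fibre; thus $[C_0]-[C]$ is represented by an effective divisor, and intersecting with an ample class $H$ gives $H\cdot C_0\geq H\cdot C=d\,(H\cdot C_0)$, forcing $d\leq 1$. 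This contradicts $d\geq 2$, so $d=1$ and $\Delta=\Delta_0$ is $\SL_2(\bZ)$-equivalent to one of the three triangles.

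I expect the first paragraph to be routine once one notices that (i), (ii) and $w\geq m$ descend to the primitive reduction, so that Theorem~\ref{trianglethm} applies verbatim. The main obstacle is the second paragraph: the combinatorial conditions genuinely cannot tell a triangle apart from its dilates, so one is forced to exploit the existence of an \emph{irreducible} curve with prescribed Newton polygon in Definition~\ref{toricelliptic}(iii). The geometric fact that does this work is that a surface carrying an elliptic fibration with fibre class $[C_0]$ admits no irreducible curve in the multiple class $d[C_0]$ for $d\geq 2$.
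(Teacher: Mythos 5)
Your proposal is correct, and its skeleton is the same as the paper's: translate a vertex to the origin, pass to the primitive reduction $\Delta_0=\Delta/k$, observe that conditions (i), (ii) and the width bound of Lemma \ref{widthlemma} are invariant under dilation so that Theorem \ref{trianglethm} pins down $\Delta_0$, and then use the elliptic fibration of Section \ref{fibrations} together with the irreducibility demanded in Definition \ref{toricelliptic}(iii) to force $k=1$. Where you genuinely diverge is the mechanism of that last step. The paper argues linearly: with $\{g,h\}$ a basis of $\LL_{\Delta_0}(m_0)$, the curves $g^k$ and $h^k$ lie in $\LL_{\Delta}(m)$ and span it (the dimension is exactly $2$ by Lemma--Definition 3.2 of \cite{effcone}), so every member has the form $\alpha g^k+\beta h^k$, which factors over $\bC$ once $k\geq 2$ --- no irreducible member can exist. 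You argue geometrically: $[C]=k[C_0]$ with $[C_0]$ the fibre class, so the irreducible $C$ is disjoint from general fibres, hence lies in a single fibre, and comparing degrees against an ample class gives $k\leq 1$. Both work; the paper's version needs only the dimension count, while yours additionally requires that $|C_0|$ be a base-point-free pencil on the resolution (which does hold: it has no fixed part because it contains the irreducible moving member $C_0$, and then $C_0^2=0$ kills the base locus) --- this is the hinge of your argument and should be stated, not just gestured at. In exchange your route isolates the cleaner general principle that an irreducible curve can never represent a proper multiple of the fibre class of a fibration.
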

	
	\begin{proof}
	First, recall that $\SL_2(\bZ)$-equivalent polygons give rise to isomorphic toric surfaces. Thus Theorem \ref{trianglethm} tells us that if $\Delta$ is primitive, then $\Delta$ can be transformed via an $\SL_2(\bZ)$ transformation to one of the triangles listed in the theorem. 
	
	Write $\Delta = \{(0, 0), (a, 0), (b, c)\}$ where $(\gcd(a, b, c) = k$ and $m = |\delta \Delta \cap \bZ^2|.$ Then $m = kn$ for some $n \in \bN.$ 
	
	Let $\Delta/k$ be $\{(0, 0), (a/k, 0), (b/k, c/k)\}.$ Then $\Delta / k$ is primitive and satisfies the hypothesis of Theorem \ref{trianglethm}. So $\Delta/k$ is one of the three triangles listed in the theorem.
	
	Since each of our triangles $\Delta/k$ gives an elliptic fibration, we know that $\dim \LL_{\Delta/k}(n) =~ 2.$ Let $\{g, h\}$ be a basis of $\LL_{\Delta/k}(n).$ Then $g^k$ and $h^k$ are two linearly independent curves in $\LL_\Delta(m),$ so $\dim \LL_\Delta(m) \geq 2.$ Thus by Lemma-Definition 3.2 of \cite{effcone}, we have that $\dim \LL_\Delta(m)=~2.$ Therefore, every member of $\LL_\Delta(m)$ is a linear combination of $g^k$ and $h^k$, and thus factors nontrivally over $\bC.$ But there must be an irreducible curve in $\LL_\Delta(m)$ by definition of a toric elliptic pair. Thus we have $k = 1$ and $\Delta / k = \Delta.$
	\end{proof}
	
	\pagebreak
	\section{Elliptic Pairs From a Nodal Cubic in $\bP^2$} \label{nodal}
	
	In this section, we will consider non-toric elliptic pairs coming from blow-ups of $\bP^2$ at nine points on the nodal cubic over an algebraically closed field of prime characteristic~$p$. 
	
	Let $C$ be the nodal cubic $y^2z = x^2(x+z)$ in $\bP^2.$ We will identify the smooth locus of $C$ with $\Pic^0 (C),$ and $\Pic^0(C)$ with $\bG_m.$ We choose $1 \in \bG_m$ to be the flex point $[0:1:0]$ of $C.$ We say that $a, q \in \bQ$ are \emph{multiplicatively independent} in $\bQ^*$ if $a^xq^y =1$ implies that $x = y = 0.$ 
	
	Let $z_1 = ... = z_7 = 1, z_8 = a, z_9 = qa^{-1}$ with $a, q \in \bQ$ multiplicatively independent. Let $X$ be the blow-up of $\bP^2$ at the nine points $z_1, ..., z_9$, which is an infinitely near blow up for $z_1, ..., z_7.$ That is, we consecutively blow up the point of intersection of the proper transform of $C$ with the exceptional divisor of the previous blow-up. Let $\bar{a}$ and $\bar{q}$ be the reductions of $a$ and $q$ modulo $p.$ Then $(C, X)$ is an elliptic pair defined over $\bQ.$ Thus by Definition 3.19 of \cite{effcone}, $(C, X)$ gives rise to an \emph{arithmetic elliptic pair} $(\mathcal{C}, \mathcal{X})$ over a nonempty open subset $\mathcal{U}$ of $\textup{Spec } \bZ$, where the geometric fiber $(C_p, X_p)$ is an elliptic pair for every $p$ in $\mathcal{U}$.
	
	\begin{theorem} \label{nodalthm}
	Let $(\mathcal{C}, \mathcal{X})$ be the arithmetic elliptic pair \cite{effcone} given by the procedure above, and defined over a nonempty open subset $\mathcal{U}$ of $\textup{Spec } \bZ,$ such that the the geometric fiber $(C_p, X_p)$ is an elliptic pair for every $p$ in $\mathcal{U}$. Then $\overline{\textup{Eff}} (X_p)$  is polyhedral if and only if $\bar{a}^2 \in \gen{\bar{q}} \subset \bF_p^*.$
	\end{theorem}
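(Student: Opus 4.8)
The plan is to translate the polyhedrality of $\bEffc(X_p)$ into a rank question for a root subsystem of $E_8$, and then settle that question by a short computation with the restriction map $\Pic(X_p)\to\Pic(C)$. First I would recall the geometry. Over $\bar\bF_p$, $X_p$ is the blow-up of $\bP^2$ at the nine points $z_1,\dots,z_9$ on the smooth locus of the nodal cubic $C=C_p$; for $p\in\mathcal U$ this reduction remains a split nodal cubic, so $\Pic^0(C)\cong\bG_m$ with the flex as origin and $z_1=\dots=z_7=1$, $z_8=\bar a$, $z_9=\bar q\bar a^{-1}$. Then $\Pic(X_p)=\bZ H\oplus\bigoplus_{i=1}^9\bZ E_i$ with the usual intersection form, the proper transform of $C$ has class $-K_{X_p}=3H-\sum_i E_i$ (each $z_i$ is a smooth point of multiplicity one, and for the infinitely near $z_1,\dots,z_7$ the relevant point of $C$ is the flex), $C^2=0$, $p_a(C)=1$, and $K_{X_p}^\perp/\bZ K_{X_p}\cong E_8$. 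I would fix the simple roots $\beta_0=H-E_1-E_2-E_3$ and $\beta_i=E_i-E_{i+1}$ $(1\le i\le8)$, which form an affine $\widetilde E_8$ diagram with $\beta_8$ the affine node; reading the marks off $-K_{X_p}=3\beta_0+2\beta_1+4\beta_2+6\beta_3+5\beta_4+4\beta_5+3\beta_6+2\beta_7+\beta_8$ shows that $\beta_0,\dots,\beta_7$ span a finite $E_8$ with highest root $\theta=3\beta_0+2\beta_1+4\beta_2+6\beta_3+5\beta_4+4\beta_5+3\beta_6+2\beta_7$, so the $\beta_7$-coordinate $c_7(\gamma)$ of any root $\gamma$ of $E_8$ lies in $\{-2,-1,0,1,2\}$, and the roots with $c_7=0$ are exactly those of the sub-$E_7=\gen{\beta_0,\dots,\beta_6}$.

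Second I would compute restrictions to $C$. Since $C$ is a split nodal cubic with the flex as origin, $\OO_C(1)$ is trivial in $\Pic^0(C)$ (three collinear points sum to the origin) and $E_i$ meets $C$ at the single point $z_i$, so a class $D=dH-\sum_i m_iE_i$ restricts in $\Pic^0(C)$ to $\prod_i z_i^{-m_i}\in\bG_m$. With our $z_i$ this gives $\beta_0|_C=\dots=\beta_6|_C=1$, while $\beta_7|_C$, $\beta_8|_C$ and $K_{X_p}|_C$ equal $\bar a^{-1}$, $\bar a^{2}\bar q^{-1}$ and $\bar q$ respectively (for a suitable normalization of $\Pic^0(C)\cong\bG_m$). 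In particular, since $K_{X_p}|_C=\bar q$, restriction induces a homomorphism $\bar r\colon E_8\to\Pic^0(C)/\gen{\bar q}$ killing $\beta_0,\dots,\beta_6$ and sending $\beta_7$ to an element of order $k$, where $k$ is the order of $\bar a$ in $\bF_p^*/\gen{\bar q}$. Hence $\bar r(\gamma)=\bar r(\beta_7)^{c_7(\gamma)}$ depends only on $c_7(\gamma)$, and $\bar r(\gamma)=0$ if and only if $k\mid c_7(\gamma)$; note that $k\le2$ is exactly the condition $\bar a^2\in\gen{\bar q}$.

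Third I would invoke the polyhedrality criterion of \cite{effcone} for elliptic pairs, together with the theory of anticanonical rational surfaces: $\bEffc(X_p)$ is polyhedral if and only if $X_p$ carries only finitely many negative curves, which happens exactly when the roots of $E_8$ represented by configurations of $(-2)$-curves on $X_p$ span $E_8\otimes\bQ$. By the standard effectivity lemma—a root $\beta\in K_{X_p}^\perp$ with $\beta|_C$ trivial in $\Pic^0(C)$ has $\beta$ or $-\beta$ effective—this subsystem is precisely $R^\circ:=\{\gamma\in E_8\text{ a root}:\bar r(\gamma)=0\}=\{\gamma:k\mid c_7(\gamma)\}$, so $\bEffc(X_p)$ is polyhedral iff $\operatorname{rk}R^\circ=8$. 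If $k=1$ then $R^\circ$ is all of $E_8$; if $k=2$ then $R^\circ$ contains the rank-$7$ $E_7$ together with $\theta$ (which has $c_7=2$), hence has rank $8$; and if $k\ge3$ then $|c_7(\gamma)|\le2<k$ forces $c_7(\gamma)=0$, so $R^\circ$ is exactly the $E_7$, of rank $7$. Therefore $\bEffc(X_p)$ is polyhedral iff $k\le2$, i.e.\ iff $\bar a^2\in\gen{\bar q}\subset\bF_p^*$.

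The step I expect to be the main obstacle is the appeal to \cite{effcone} in the third paragraph: making precise, and matching to their statement, the chain ``polyhedral $\iff$ finitely many negative curves $\iff$ the $(-2)$-curve root subsystem spans $E_8\otimes\bQ$'', and the effectivity lemma identifying that subsystem with $R^\circ$. This rests on the machinery of anticanonical rational surfaces—effectivity of roots restricting trivially to $C$ via Riemann--Roch and $-K_{X_p}$ effective, plus a Shioda--Tate/Mordell--Weil-type rank count—and one should keep in mind that $|{-K_{X_p}}|$ is generically not a pencil, being one only when $\bar q=1$. One must also check the routine input that for $p\in\mathcal U$ the reduction $C_p$ is a split nodal cubic whose smooth locus contains the nine ordinary and infinitely near points, which legitimates the formula $D|_C=\prod_i z_i^{-m_i}$ and the identification $-K_{X_p}=[C]$; granting this, the restriction computation and the $E_8$ bookkeeping are elementary.
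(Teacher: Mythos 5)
Your proof is correct and follows essentially the same strategy as the paper's: restrict the root lattice $\bE_8 \cong C^{\perp}/\gen{C}$ to $\Pic^0(C)\cong\bG_m$, observe that the seven simple roots supported on $z_1,\dots,z_7$ and the flex tangent are killed and generate an $\bE_7$, compute $\res(C)=q^{\pm 1}$, and reduce polyhedrality to whether some further root restricts into $\gen{\bar q}$. The executions differ only in bookkeeping. The paper contracts the effective $\bE_7$ configuration to a surface $Y$ of Picard number three and invokes Corollaries 3.14 and 3.18 of \cite{effcone}, which state exactly that $\bEffc(X_p)$ is polyhedral iff $\overline{\res}(\beta)=0$ for some root $\beta\in\bE_8\setminus\bE_7$; this is equivalent to your condition $\operatorname{rk}R^{\circ}=8$, since $R^{\circ}$ always contains the rank-seven $\bE_7$ and a root of $\bE_8$ outside $\bE_7$ is automatically outside its rational span. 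Your enumeration via the coefficient $c_7(\gamma)\in\{-2,\dots,2\}$, bounded by the mark of $\beta_7$ in the highest root, is a cleaner substitute for the paper's explicit table of images $a^{\pm1}, a^{\pm2}$ of the roots outside $\bE_7$, and it makes transparent why only the question of whether the order of $\bar a$ in $\bF_p^*/\gen{\bar q}$ is at most $2$ matters. Two small points to tighten: the effectivity statement must be applied to roots $\beta$ with $\res(\beta)\in\gen{\res(C)}$, producing effective classes of the form $\pm\beta+nC$ (harmless for the rank count) rather than only to roots with literally trivial restriction --- this is precisely the role of the induced map $\overline{\res}$ into $\Pic^0(C)/\gen{\res(C)}$ in the paper; and the equivalence ``polyhedral iff the subsystem of roots killed by $\overline{\res}$ has full rank,'' which you flag as the main obstacle, is best sourced directly from \cite{effcone} (as the paper does, in the Picard-number-three form after contraction) rather than rederived from general anticanonical-surface theory.
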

	
	\begin{proof}
	There is a restriction map
	\begin{equation*}
	    \res: \Pic(X) \to \Pic (C)
	\end{equation*}
	which sends $C^{\perp}$ into $\Pic^0(C).$ Also, observe that $C^{\perp} / \gen{C}$ with an intersection pairing gives rise to the root lattice $\bE_8.$ In particular, the curves of self-intersection $-2$ on $X$ are effective roots in $\bE_8$. We will show that a certain subset of these effective roots generates a root sublattice of rank 7.
	
	Let $E_i$ be the exceptional divisor of the $i$th blow-up. For ease of notation, we will allow ourselves to denote the proper transform of $\widetilde{E_i}$ after the $(i+2)$th blow-up as $\widetilde{E_i}$, because $\widetilde{E_i}$ is disjoint from $E_{i+2}$ and is thus unchanged by the $(i+2)$th blow-up. Thus $E_i = \widetilde{E_i} + .. . + \widetilde{E_{6}} + E_{7}$ for $i \leq 7.$ Then $E_i^2 = -1$ and $E_i \cdot E_j = 0$ if $|j-i| \geq 1.$ 
	
	Let $h$ be the class of a line in $\Pic^0(C)$. We can label the Dynkin diagram of $\bE_8$ with roots as in Figure~\ref{dynkin}, all of which are effective except $E_7 - E_8$.
	
    \tikzset{every picture/.style={line width=0.75pt}}
    
    \begin{figure}[ht]
        \centering
     \begin{tikzpicture}[x=0.75pt,y=0.75pt,yscale=-1,xscale=1, dot/.style = {circle, fill = black, minimum size=#1, inner sep=0pt, outer sep=0pt}, dot/.default = 5pt, every label/.append style={ font=\scriptsize}]
     
    \draw    (100, 0) -- (150, 0) ;
    \node[dot, label={$E_1-E_2$}, sloped, shift={(100,0)}] at (0, 0) {};
    \node[dot, label={$E_2-E_3$}, shift={(150,0)}] at (0, 0) {};
    \draw    (150, 0) -- (200, 0) ;
    \node[dot, label={$E_3-E_4$}, shift={(200,0)}] at (0, 0) {};
    \draw    (200, 0) -- (250, 0) ;
    \node[dot, label={$E_4-E_5$}, shift={(250,0)}] at (0, 0) {};
    \draw    (250, 0) -- (300, 0) ;
    \node[dot, label={$E_5-E_6$}, shift={(300,0)}] at (0, 0) {};
    \draw    (300, 0) -- (350, 0) ;
    \node[dot, label={$E_6-E_7$}, shift={(350,0)}] at (0, 0) {};
    \draw    (350, 0) -- (400, 0) ;
    \node[dot, label={$E_7 - E_8$}, shift={(400,0)}] at (0, 0){};
    \draw    (200, 0) -- (200, 40) ;
    \node[dot, label={$h - E_1-E_2-E_3$}, shift={(200,-40)}, rotate = 90] at (0, 0) {};
    
    \end{tikzpicture}
    \caption{Dynkin diagram of $\bE_8$ labeled with simple roots}
    \label{dynkin}
    \end{figure}
    
    Since $1$ is a flex point of $C$, the root $(h - E_1 - E_2 - E_3)$ is realized by the class of a proper transform of a tangent line at $1$, so it is indeed an effective root. Also, $E_i - E_{i+1} = \widetilde{E_i}$ are effective roots for all $1 \leq i \leq 7.$ Computing the self-intersections, we see that $(E_i - E_{i+1})^2 = -2.$ Also, since $h$ is disjoint from $E_1, E_2,$ and $E_3,$ we have $(h - E_1 - E_2 - E_3)^2 = h^2 - 2h(E_1 + E_2 + E_3) + (E_1 + E_2 + E_3)^2 = 1 + 0 - 3 = -2.$ Finally, curves that correspond to adjacent nodes have intersection 1: $(E_i - E_{i+1})(E_{i+1} - E_{i+2}) = - E_{i+1}^2 = 1.$

	Because $\res (C^{\perp})$ lies in $\Pic^0(C),$ we obtain an induced map 
	
	\begin{equation*}
	    \overline{\res}: C^{\perp}/\gen{C} \to \Pic^0(C) / \gen{\res(C)}.
	\end{equation*}
	
	Now, consider the root sublattice $\bE_7$ of $\bE_8$ generated by $E_1 - E_2, ..., E_6- E_7$ and $h - E_1 - E_2 - E_3$. The roots in $\bE_7$ are mapped to $0$ by $\overline{\res},$ because the corresponding curves are disjoint from $C.$ 
	
	Furthermore, we can contract the effective roots in $\bE_7$ to get a surface $Y$ with a single Du Val singularity, as shown in Figure~\ref{fig:contraction}.
	
	 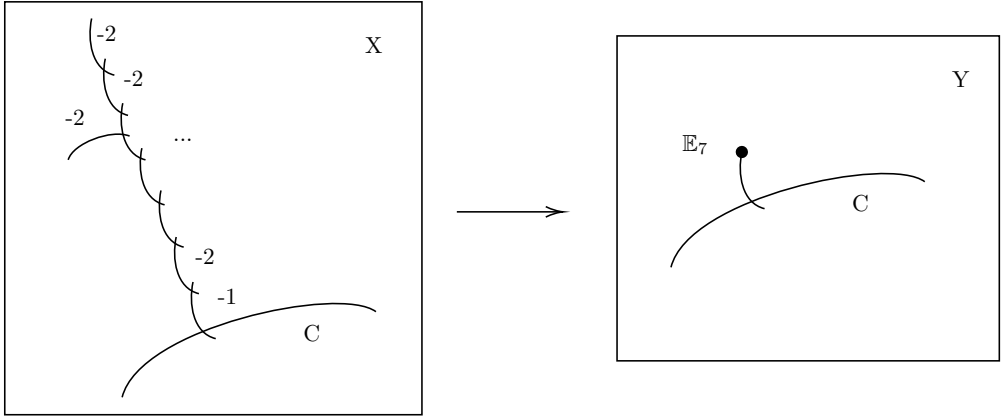
\begin{figure}[ht]
        \centering
        \scalebox{.8}{\tikzset{every picture/.style={line width=0.75pt}} 

\begin{tikzpicture}[x=0.75pt,y=0.75pt,yscale=-1,xscale=1]

\draw    (123,267.5) .. controls (134,223.5) and (260,196.5) .. (283,213.5) ;
\draw    (167.67,194.75) .. controls (164.8,208.59) and (167.67,227.04) .. (182,230.5) ;
\draw    (157.07,166.37) .. controls (154.2,180.21) and (157.07,198.67) .. (171.4,202.13) ;
\draw    (147.47,137) .. controls (144.6,150.84) and (147.47,169.29) .. (161.8,172.75) ;
\draw    (135.43,110.32) .. controls (132.57,124.16) and (135.43,142.61) .. (149.77,146.07) ;
\draw    (123.4,81.94) .. controls (120.53,95.78) and (123.4,114.24) .. (137.73,117.7) ;
\draw    (112.37,53.87) .. controls (109.5,67.71) and (112.37,86.17) .. (126.7,89.63) ;
\draw    (103.77,28.5) .. controls (100.9,42.34) and (103.77,60.79) .. (118.1,64.25) ;
\draw    (127.7,102.7) .. controls (116.23,98.09) and (91.87,107.32) .. (89,117.7) ;
\draw    (469,185.5) .. controls (480,141.5) and (606,114.5) .. (629,131.5) ;
\draw    (513.67,112.75) .. controls (510.8,126.59) and (513.67,145.04) .. (528,148.5) ;
\draw [shift={(513.67,112.75)}, rotate = 101.7] [color={rgb, 255:red, 0; green, 0; blue, 0 }  ][fill={rgb, 255:red, 0; green, 0; blue, 0 }  ][line width=0.75]      (0, 0) circle [x radius= 3.35, y radius= 3.35]   ;
\draw    (334,150.5) -- (399,150.5) ;
\draw [shift={(401,150.5)}, rotate = 180] [color={rgb, 255:red, 0; green, 0; blue, 0 }  ][line width=0.75]    (10.93,-3.29) .. controls (6.95,-1.4) and (3.31,-0.3) .. (0,0) .. controls (3.31,0.3) and (6.95,1.4) .. (10.93,3.29)   ;
\draw   (49,18) -- (312,18) -- (312,278.5) -- (49,278.5) -- cycle ;
\draw   (435,39.5) -- (676,39.5) -- (676,244.5) -- (435,244.5) -- cycle ;

\draw (105.77,31.5) node [anchor=north west][inner sep=0.75pt]   [align=left] {\mbox{-}2};
\draw (122.77,60.5) node [anchor=north west][inner sep=0.75pt]   [align=left] {\mbox{-}2};
\draw (85.77,84.5) node [anchor=north west][inner sep=0.75pt]   [align=left] {\mbox{-}2};
\draw (153.77,102.5) node [anchor=north west][inner sep=0.75pt]   [align=left] {...};
\draw (167.77,172.5) node [anchor=north west][inner sep=0.75pt]   [align=left] {\mbox{-}2};
\draw (181.77,197.5) node [anchor=north west][inner sep=0.75pt]   [align=left] {\mbox{-}1};
\draw (235.77,220.5) node [anchor=north west][inner sep=0.75pt]   [align=left] {C};
\draw (581.77,138.5) node [anchor=north west][inner sep=0.75pt]   [align=left] {C};
\draw (474.77,100.5) node [anchor=north west][inner sep=0.75pt]   [align=left] {$\mathbb{E}_7$};
\draw (274.77,38.5) node [anchor=north west][inner sep=0.75pt]   [align=left] {X};
\draw (644.77,60.5) node [anchor=north west][inner sep=0.75pt]   [align=left] {Y};

\end{tikzpicture}}
        \caption{Contracting the $\bE_7$ sublattice in $X$ to obtain $Y$}
        \label{fig:contraction}
    \end{figure}
    From Corollary 3.14 in \cite{effcone}, $\bEffc(X)$ is polyhedral if and only if $\bEffc(Y)$ is polyhedral. Since $p(X) = 10,$ we see $p(Y) = 3.$  This allows us to use Corollary 3.18 in \cite{effcone} to obtain a criterion for polyhedrality in our case: $\bEffc(Y)$ is polyhedral if and only if $\overline{\res}(\beta) = 0$ for some root $\beta \in \bE_8 \setminus \bE_7.$ 
	
	We will interpret the condition $\overline{\res}(\beta) = 0$ in terms of $a$ and $q.$ First, we compute $\res(C)$ as follows:
	\begin{align*}
	    & \res(C) = \res(3h - \sum_{i = 1}^9 E_i) = 3\cdot \res(h - E_1 - E_2 - E_3) + 2(E_1 + E_2 + E_3) - \sum_{i = 4}^9 E_i.
	\end{align*}
	
	Since $h - E_1 - E_2 - E_3$ is represented by a curve of self-intersection $-2$ disjoint from $C,$ we have $\res(h - E_1 - E_2 - E_3) = 0.$ Restricting each $E_i$ to $C$ gives $E_i \cap C = [z_i],$ i.e.~the class of the point $z_i$ in $\Pic^0(C).$ Thus 
	\begin{equation*}
	    \res(C) = 2([z_1] + [z_2] + [z_3]) - \sum_{i = 4}^9 [z_i].
	\end{equation*}
	
	From our identification of $\Pic^0(C)$ with $\bG_m,$ we have
	\begin{equation*}
	    \res(C) = (z_1z_2z_3)^2(z_4...z_9)^{-1} = (aqa^{-1})^{-1} = q^{-1}.
	\end{equation*}
	Thus $\overline{\res}(\beta) = 0$ if and only if $\res(\beta) \in \gen{q}$ in $\bF_p^*.$ In Table \ref{roottable}, we analyze the image of the remaining roots of $\bE_8 = C^{\perp} / \gen{C}$ which are not in $\bE_7,$ again using the identification with $\bG_m.$ Note that since $1$ is a flex point, we can identify $h$ with the tangent line to $C$ at $1 = z_0$. Then $\overline{\res}(h) = 3[z_0].$
	
	\begin{table}[h!]
    \centering 
    \begin{tabular}{c c } 
     Roots &  Image in $\Pic^0(C) / \gen{\res(C)}$ \\[0.5ex] 
    \hline 
    $E_i - E_8, \ i < 8$ & $a^{- 1}$\\
    $E_i - E_9, \ i<9$ & $a$\\
    $E_8 - E_9$ & $a^2$\\ 
    $h - E_i - E_j - E_8, \ i < j < 8$ & $a^{-1}$ \\
    $h - E_i - E_j - E_9, \ i < j < 8$ & $a$ \\
    \hline
    \end{tabular}
    \caption{Images of roots under the map $\overline{\res}$}
    \label{roottable} 
    \end{table}
    
	Thus $\bEffc (X)$ is polyhedral if and only if at least one of $a$ and $a^2$ is in $\gen{q}.$ But if $a$ is in $\gen{q},$ then so is $a^2.$
	\end{proof}
	
	Next, we prove a lemma which tells us that the polyhedrality condition of Theorem \ref{nodalthm} fails for a set of primes of positive density. This lemma is known to experts, but we could not find a reference.
	
	\begin{lem} \label{nonpoly}
	Let $a, q \in \bQ$ be multiplicatively independent in $\bQ^*$. Consider the set 
	\begin{equation*} \label{primecondition}
	    S(a, q)^c = \{ p \text{ prime: } \bar{a} \notin \gen{\bar{q}} \text{ in } \bF_p^* \}.
	\end{equation*}
	Then $S(a, q)^c$ contains a set of prime numbers of positive density.
	\end{lem}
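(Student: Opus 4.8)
The plan is to exhibit, for one carefully chosen auxiliary prime $\ell$, a positive-density set of primes $p$ for which $\bar{a}$ is not even an $\ell$-th power in $\bF_p^*$ while $\bar{q}$ is; since the cyclic subgroup $\gen{\bar{q}}$ is then contained in the subgroup $(\bF_p^*)^\ell$ of $\ell$-th powers, such primes automatically satisfy $\bar{a}\notin\gen{\bar{q}}$ and hence lie in $S(a,q)^c$. So it suffices to produce the positive-density set, which will come from the Chebotarev density theorem applied to a Kummer tower over $\bQ$.

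First I would pin down the auxiliary prime. Since $a$ and $q$ are multiplicatively independent in $\bQ^*$, neither equals $\pm 1$, and the subgroup $\gen{a, q}\subset\bQ^*$ is free of rank $2$. I would then argue that for all but finitely many primes $\ell$ the images of $a$ and $q$ in $\bQ^*/(\bQ^*)^\ell$ generate a subgroup isomorphic to $(\bZ/\ell\bZ)^2$ — the exceptions being the $\ell$ dividing the index of $\gen{a,q}$ in its saturation inside $\bQ^*/\{\pm1\}\cong\bigoplus_p\bZ$. Fix one such odd prime $\ell$. Note that in particular $q$ and $a$ are then not $\ell$-th powers in $\bQ^*$.

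Next I would compute the degrees of the Kummer tower $\bQ\subset K_1:=\bQ(\zeta_\ell)\subset K_2:=\bQ(\zeta_\ell, q^{1/\ell})\subset K_3:=\bQ(\zeta_\ell, q^{1/\ell}, a^{1/\ell})$, each of which is Galois over $\bQ$ (being the splitting field of a product of polynomials $x^\ell-c$). Using the standard descent fact that $\bQ^*\cap(K_1^*)^\ell=(\bQ^*)^\ell$ for $\ell$ prime, the independence of $a, q$ modulo $\ell$-th powers carries over to $K_1^*/(K_1^*)^\ell$, so Kummer theory over $K_1$ (which contains $\zeta_\ell$) gives $[K_3:K_1]=\ell^2$ and $[K_2:K_1]=\ell$, hence $[K_2:\bQ]=\ell(\ell-1)$ and $[K_3:\bQ]=\ell^2(\ell-1)$. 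I expect this degree computation — in particular justifying that $a$ stays independent of $q$ modulo $\ell$-th powers after adjoining $\zeta_\ell$ and $q^{1/\ell}$ — to be the main obstacle; everything else is bookkeeping.

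Finally, a Chebotarev argument finishes it. For $p$ unramified in $K_3$: $p$ splits completely in $K_1$ iff $p\equiv 1\pmod\ell$, and given that, since $\bF_p$ then contains the $\ell$-th roots of unity, $p$ splits completely in $K_2$ iff $\bar{q}\in(\bF_p^*)^\ell$, and in $K_3$ iff additionally $\bar{a}\in(\bF_p^*)^\ell$. By the Chebotarev density theorem the set of $p$ splitting completely in $K_2$ but not in $K_3$ has density $\tfrac{1}{[K_2:\bQ]}-\tfrac{1}{[K_3:\bQ]}=\tfrac{1}{\ell(\ell-1)}-\tfrac{1}{\ell^2(\ell-1)}=\tfrac{1}{\ell^2}>0$; for each such $p$ we have $\bar{q}\in(\bF_p^*)^\ell\not\ni\bar{a}$, whence $\gen{\bar{q}}\subseteq(\bF_p^*)^\ell$ yet $\bar{a}\notin\gen{\bar{q}}$. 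Discarding the finitely many primes dividing the numerators or denominators of $a$ and $q$ or ramifying in $K_3$ leaves a positive-density subset of $S(a,q)^c$, as required.
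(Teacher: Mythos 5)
Your proof is correct and takes essentially the same route as the paper's: both isolate an auxiliary prime $\ell$, work in the Kummer extension $\bQ(\zeta_\ell, a^{1/\ell}, q^{1/\ell})$, and apply Chebotarev to the condition that $\bar{q}$ is an $\ell$-th power modulo $p$ while $\bar{a}$ is not, arriving at the same density $1/\ell^2$. If anything, your write-up is slightly more careful, since you justify the choice of $\ell$ for which $a$ and $q$ stay independent modulo $\ell$-th powers (the paper asserts without argument that $a$ and $q$ are not $\ell$-th powers) and your complete-splitting formulation replaces the paper's explicit Frobenius-conjugacy-class bookkeeping.
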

	
	\begin{proof}
	For ease of notation, we switch to writing $a, q$ for the reductions of $a, q$ modulo $p.$ Observe that $a \in \gen{q}$ if and only if $\gen{a} \in \gen{q}$ in $\bF_p^*.$ Let $o(x)$ denote the order of $x$ in $\bF_p^*.$ Since $\bF_p^*$ is cyclic, $\gen{a}$ is in $\gen{q}$ if and only if $o(a)$ divides $o(q),$ if and only if $[\bF_p^* : \gen{q}] \mid [\bF_p^* : \gen{a}].$
    
    Now, suppose $l \in \bZ$ is a prime satisfying the conditions (i) $l  \mid  [\bF_p^* : \gen{q}]$ and (ii) $l \nmid [\bF_p^{*}  : \gen{a}].$ Then $[\bF_p^* : \gen{q}] \nmid [\bF_p^* : \gen{a}].$ Thus (i) and (ii) are sufficient conditions for a prime $p$ to lie in $S(a, q)^c.$The conditions (i) and (ii) can be rewritten by multiplying both sides by $o(q)/l$, $o(a)/l$ respectively to get (i) $q^\frac{p-1}{l} = 1$ in $\bF_p^*$ and (ii) $a^\frac{(p-1)}{l} \neq 1$ in $\bF_p^*.$
    
    Let $K = \bQ(\zeta_l, a^{\frac{1}{l}}, q^{\frac{1}{l}}).$ We will proceed by encoding (i) and (ii) as conditions on the class of the Frobenius element of $p$ in the Galois group of $K$ over $\bQ$, and invoking Chebotarev's density theorem. 
    
    By the assumption that $a$ and $q$ are multiplicatively independent in $\bQ^*,$ we have $(a, q) = 1$ and that $a, q$ are not $l$th powers. Then the Galois group $G$ of $K / \bQ$ is isomorphic to $\bZ_{l}^{\times} \ltimes (\bZ_{l} \times \bZ_{l}).$ Let $\zeta$ denote the $l$th root of unity. Then $G$ is generated by elements of the form 
    \begin{align*}
        & \sigma_s: \zeta \mapsto \zeta^s\\
        & \tau_t: q^{\frac{1}{l}} \mapsto \zeta^t q^{\frac{1}{l}}\\
        & \rho_r: a^{\frac{1}{l}} \mapsto \zeta^r a^{\frac{1}{l}}.
    \end{align*}
    where if the action of $\sigma \in G$ on an element $\alpha \in K$ is not written, then $\sigma$ acts as the identity on $\alpha.$ We can thus identify $G$ with $\gen{\sigma_1} \ltimes (\gen{\tau_1} \times \gen{\rho_1}).$ 
    
    We proceed by writing $\bFrob_p = (\sigma_s, \tau_t, \rho_r)$ and determining what $s, t,$ and $r$ must be. By definition of the Frobenius element, we have $\bFrob_p(\zeta) = \zeta^p.$ But by condition (i) we have $p = 0 \mod l.$ So if $\bFrob_p(\zeta) = \zeta^s,$ then $s = 0.$ Similarly, we have $\bFrob_p(q^\frac{1}{l}) = q^\frac{p}{l}.$ Suppose that condition (i) holds, i.e.~$q^\frac{p-1}{l} = 1$ in $\bF_p^*.$ Then $\bFrob_p(q^\frac{1}{l}) = q^\frac{1}{l}.$ So $t = 0.$ By condition (ii), $\bFrob_p(a^\frac{1}{l}) \neq a^\frac{1}{l}.$ So $r \neq 0.$ Thus $p$ satisfies (i) and (ii) if and only if in the Galois group $G,$ $\bFrob_p$ is of the form $(\sigma_1, \tau_0, \rho_{\neq 0}).$ 
    
    We check that these elements form a conjugacy class. Any element $(\sigma_{s_0}, \tau_{t_0}, \rho_{r_0})$ in $G$ is fixed by by elements of the form $\tau_t, \rho_r.$ Conjugating by an element of the form $\sigma_s$ gives $(\sigma_{s_0}, \tau_{st_0}, \rho_{sr_0}).$ But since $1 \leq s \leq p-1,$ we remain in the conjugacy class.
    
    Applying the Chebotarev density theorem, we see that there are asymptotically $(l-1) / ((l-1) \cdot l \cdot l)$ such primes.
	\end{proof} 
	
	In general, it is difficult to ``patch" together different values of $l$ to obtain an asymptotic density for $S(a, q)^c,$ since the probabilities given by the Chebatorev density theorem are not statistically independent \cite{moree}. However,  Moree and Stevenhagen showed in Theorem 2 of \cite{moree} that assuming the generalized Riemann hypothesis, the density of $S(a, q)$ can be precisely calculated to be
	\[c_{a, q} \cdot \prod_{p \text{ prime}} \left( 1 - \frac{p}{p^3 -1}\right)\]
	where $c_{a,q}$ is a constant depending on $a$ and $q.$ The product $\prod_{p \text{ prime}} \left( 1 - \frac{p}{p^3 -1}\right)$ converges to the Stephens constant $S$, whose approximation to $50$ decimal places is
	\[S \approx 0.57595 99688 92945 43964 31633 75492 49669 25065 13967 17649.\]
	In the case that $\bQ^*/\gen{-1, a, b}$ is torsion-free, then $c_{a, b}$ is between $0.981$ and $1.024.$ Thus in this case $(X_p, C_p)$ has polyhedral effective cone for roughly $57\%$ of primes, and non-polyhedral effective cone for roughly $43\%$ of primes.
	
	Furthermore, a theorem of Heath-Brown tells us that for any three primes $(a, b, c),$ Artin's conjecture is true for at least one of $a, b,$ and $c$ \cite{heathbrown}. Combined with Lemma \ref{nonpoly}, this allows us to explicitly construct surfaces with a polyhedral effective cone for a set of primes of positive density, and a non-polyhedral effective cone for a set of primes of positive density.
	
	\begin{cor} \label{threesurfaces}
	Let $C$ be the nodal cubic, with $\textup{Pic}^0(C)$ identified with $\bG_m.$ Let $a, q \in \bQ$ be multiplicatively independent. Consider the surface $X_p^q$ given by the blow-up of $\bP^2$ at $z_1 = ... = z_7 = 1$ and $z_8 = a, z_9 = qa^{-1},$ where $z_i$ are points on the nodal cubic. Then at least one of $X^2_p, X^3_p,$ and $X^5_p$ has both a polyhedral effective cone for a set of primes of positive density, and a non-polyhedral effective cone for a set of primes of positive density.
	\end{cor}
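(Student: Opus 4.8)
The plan is to assemble the corollary from three already-available ingredients: Theorem~\ref{nodalthm}, Lemma~\ref{nonpoly}, and the cited work of Artin/Hooley and Heath-Brown. First I would record the arithmetic translation of polyhedrality. By Theorem~\ref{nodalthm}, for every $p$ in the (cofinite) open set $\mathcal U$ over which the arithmetic elliptic pair is defined, $\bEffc(X_p^q)$ is polyhedral if and only if $\bar a^2 \in \gen{\bar q} \subset \bF_p^*$, i.e.\ $p \in S(a^2,q)$. Since $\mathcal U$ omits only finitely many primes, any positive-density statement about $S(a^2,q)$ or its complement $S(a^2,q)^c$ passes directly to a positive-density statement about polyhedrality, respectively non-polyhedrality, of $X_p^q$. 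So the goal is reduced to producing a single $q_0 \in \{2,3,5\}$ for which both $S(a^2,q_0)$ and $S(a^2,q_0)^c$ contain sets of primes of positive density.

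Next I would dispatch the non-polyhedral half, which in fact works for \emph{every} admissible $q$ and hence for all three surfaces at once. The point is that $a$ and $q$ multiplicatively independent in $\bQ^*$ forces $a^2$ and $q$ multiplicatively independent as well: $(a^2)^x q^y = 1$ gives $a^{2x}q^y = 1$, whence $2x = y = 0$ and so $x = y = 0$. Therefore Lemma~\ref{nonpoly}, applied with $a$ replaced by $a^2$, shows $S(a^2,q)^c$ contains a set of primes of positive density, i.e.\ $X_p^q$ has non-polyhedral effective cone for a positive-density set of primes. In particular this holds for each of $X^2_p,X^3_p,X^5_p$.

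For the polyhedral half I would route Artin's conjecture through Heath-Brown. If $\bar q$ is a primitive root modulo $p$, then $\gen{\bar q} = \bF_p^*$, so $\bar a^2 \in \gen{\bar q}$ automatically (regardless of $a$), and $p \in S(a^2,q)$; thus whenever Artin's conjecture holds for $q$, the set $S(a^2,q)$ contains the positive-density set of primes for which $\bar q$ is a primitive root, and $X_p^q$ is polyhedral on that set. Each of $2,3,5$ is neither a perfect square nor $-1$, so Artin's conjecture is meaningful for all three, and by Heath-Brown's theorem \cite{heathbrown} it holds for at least one of them; choosing such a prime $q_0 \in \{2,3,5\}$ and combining with the previous paragraph yields that $X^{q_0}_p$ has polyhedral effective cone for a positive-density set of primes and non-polyhedral effective cone for a positive-density set of primes. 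There is no genuine obstacle here — the corollary is a formal consequence of Theorem~\ref{nodalthm}, Lemma~\ref{nonpoly}, and the cited number theory; the only points needing a word of care are the stability of multiplicative independence under $a \mapsto a^2$ (so Lemma~\ref{nonpoly} applies verbatim), the elementary observation that a primitive root for $q$ forces $\bar a^2 \in \gen{\bar q}$ independently of $a$, and the bookkeeping that Heath-Brown's result leaves at most finitely many exceptional primes for Artin, so at least one of the three specified primes is non-exceptional.
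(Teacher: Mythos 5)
Your proposal is correct and follows essentially the same route the paper takes (the paper leaves this corollary without a formal proof, relying on the preceding discussion of Heath-Brown's theorem together with Lemma~\ref{nonpoly}). Your explicit check that multiplicative independence of $a,q$ passes to $a^2,q$ — so that Lemma~\ref{nonpoly} applies to the set $S(a^2,q)^c$ actually dictated by Theorem~\ref{nodalthm} — is a point the paper glosses over by writing $S(a,q)$ throughout, and is a welcome addition.
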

	
	\section{Sage Code for Section 2} \label{code}
	\subsection{Code for Section 2, Case 1.} \label{code1}
	The following code finds triangles for which $\frac{a}{m} > \frac{2}{3}.$
	\begin{lstlisting}[language = Python,  basicstyle=\ttfamily\small, showstringspaces=false]
# Check gcd conditions and bounds on a/m
def check(e, x, p, s, k):
    d=(e*s*x+p)/2
    a = e*s^2
    c = e*(s+x)^2
    m = e*s*(s+x)
    return s>0 and gcd(a,d)==1 and (s-x)/s > 2/3 and m-a-d==gcd(2*d+a,c)

def check_integrality(k, y):
    return k.is_integer() and int(k)%2==1 and y.is_integer() and y >= 0

# Loop over parameters (e, x, p)
k, y = var('k,y')
for x in range(1,5):
    print("Solutions for t - s = %s" %x)
    for e in range (1, floor(18/(x^2))):
        for p in range (0, e*x^2):
            rhs = 16*(e*x^2 -p)^2
            divs = divisors(rhs)
            for div in divs:
                eqn1 = (e*x^2*k - 4*(e*x^2-p) - x*y == div)
                eqn2 = (e*x^2*k - 4*(e*x^2-p) + x*y == rhs/div)
                sol = solve([eqn1,eqn2],k,y, solution_dict=True)
                k1 = sol[0][k]
                y1 = sol[0][y]
                if check_integrality(k1, y1):
                    D =  e^2*x^2*(4-k1)^2 -8*e*(2*e*x^2 - p*k1)
                    s1 = 1/(4*e)*(-e*x*(4 -k1) + sqrt(D))
                    s2 = 1/(4*e)*(-e*x*(4 -k1) - sqrt(D))
                    for s in [s1,s2]:
                        if s.is_integer() and check(e,x,p,s,k1): print(sol)
	\end{lstlisting}
	
	\subsection{Code for Section 2, Case 2.}\label{code2}
	The following code finds triangles for which $\half < \frac{a}{m} \leq~\frac{2}{3}.$
	\begin{lstlisting}[language = Python, basicstyle=\ttfamily\small, showstringspaces=false]
# Loop over parameter k
possiblek = [4,5,7,8,9,10, 11]
for k in possiblek:
    divs = divisors(k)
    for x in divs:
        for a in range(1, floor((4*k*x)/(2*k-9))):
            m1 = 1/2*(k*a + sqrt((k*a)^2 - 4*k*a*(a + x)))
            m2 = 1/2*(k*a - sqrt((k*a)^2 - 4*k*a*(a + x)))
            for m in [m1, m2]:
                if m.is_integer() and a/m > 1/2:
                    print("k,a,x,m = %s,%s,%s,%s" %(k,a,x,m))
	\end{lstlisting}
	
	\subsection{Code for Section 2, Case 3.}\label{code3}
	The following code finds triangles for which $ \frac{1}{3}~\leq~\frac{a}{m}~\leq~\half.$
	\begin{lstlisting}[language = Python, basicstyle=\ttfamily\small, showstringspaces=false]
# Check that the lattice triangle is primitive
def primitive(b, c, x, y, m):
    a = (m1-y)/2
    d = m1-a-x
    b = b*d
    c = c*d
    return m>0 and gcd(a, gcd(b, c)) == 1
    
# Loop over parameters (b = b0, c = c0, x, y)
for b in range(3, 12):
    for c in range(b+2, 12):
        divs = divisors(c)
        for x in divs:
            for y in range(0, 2*x):
                D = (2*c*x)^2 + 4*(c-4)*c*y*(y - 2*x)
                m1 = 1/(2*(c-4))*(2*c*x + sqrt(D))
                m2 = 1/(2*(c-4))*(2*c*x - sqrt(D))
                for m in [m1, m2]:
                    if m.is_integer() and gcd(b,c)==1 and primitive(b,c,x,y,m):
                        a = (m-y)/2
                        d = m-a-x
                        if x == gcd(b*d - a, c*d):
                            print("m,a,b,c= %s,%s,%s,%s" %(m,a,b*d,c*d))
	\end{lstlisting}
    
    \begin{bibdiv}
    \begin{biblist}
    \bib{effcone}{article}{
      title={Blown-up toric surfaces with non-polyhedral effective cone}, 
      author={Ana-Maria Castravet and Antonio Laface and Jenia Tevelev and Luca Ugaglia},
      year={2021},
      journal={arXiv:2009.14298},
      eprint={https://arxiv.org/pdf/2009.14298.pdf},
      archivePrefix={arXiv},
      primaryClass={math.AG}
}

\bib{lafacepkg}{misc}{
    title={non-polyhedral},
    author={Ana-Maria Castravet and Antonio Laface and Jenia Tevelev and Luca Ugaglia},
    note = {Available at \url{https://github.com/alaface/non-polyhedral}}
}

\bib{artebani}{article}{
      title={Cox rings of extremal rational elliptic surfaces}, 
      author={Michela Artebani and Alice Garbagnati and Antonio Laface},
      year={2013},
      journal = {arXiv:1302.4361},
      eprint={https://arxiv.org/pdf/1302.4361.pdf},
      archivePrefix={arXiv},
      primaryClass={math.AG}
}

\bib{hooley}{article}{
    author = {Christopher Hooley},
    doi = {doi:10.1515/crll.1967.225.209},
    url = {https://doi.org/10.1515/crll.1967.225.209},
    title = {On Artin's conjecture.},
    journal = {},
    number = {225},
    volume = {1967},
    year = {1967},
    pages = {209--220}
    }

\bib{heathbrown}{article}{
    author = {Heath-Brown, D. R.},
    title = {Artin's conjecture for primitive roots.},
    journal = {The Quarterly Journal of Mathematics},
    volume = {37},
    number = {1},
    pages = {27-38},
    year = {1986},
    month = {03},
    issn = {0033-5606},
    doi = {10.1093/qmath/37.1.27},
    url = {https://doi.org/10.1093/qmath/37.1.27}
}

\bib{polya}{article}{
  title={Arithmetische Eigenschaften der Reihenentwicklungen rationaler Funktionen.},
  author={Georg P{\'o}lya},
  journal={Journal f{\"u}r die reine und angewandte Mathematik (Crelles Journal)},
  volume={1921},
  pages={1 - 31}
}

\bib{moree}{article}{
author = {Moree, Pieter and Stevenhagen, Peter},
year = {2000},
month = {12},
pages = {291-304},
title = {A Two-Variable Artin Conjecture},
volume = {85},
journal = {Journal of Number Theory},
doi = {10.1006/jnth.2000.2547}
}

\bib{negcurves}{article}{
    title = {The geography of negative curves},
    author = {Javier González-Anaya and José Luis González and Kalle Karu},
    year={2021},
    journal={arXiv:2104.03950},
    eprint={https://arxiv.org/pdf/2104.03950.pdf},
    archivePrefix={arXiv},
    primaryClass={math.AG}
}

\bib{kurano}{article}{
    title = {Equations of negative curves of blow-ups of Ehrhart rings of rational convex polygons},
    author = {Kazuhiko Kurano},
    year={2021},
    journal={arXiv:2101.02448},
    eprint={https://arxiv.org/pdf/2101.02448.pdf},
    archivePrefix={arXiv},
    primaryClass={math.AG}
}

\bib{langtrotter}{article}{
    author = {S. Lang and H. Trotter},
    title = {{Primitive points on elliptic curves}},
    volume = {83},
    journal = {Bulletin of the American Mathematical Society},
    number = {2},
    publisher = {American Mathematical Society},
    pages = {289 -- 292},
    year = {1977},
    doi = {bams/1183538693},
    URL = {https://doi.org/}
}
    \end{biblist}
    \end{bibdiv}
	
\end{document}